\theoremstyle{plain}
\newtheorem{theorem}{Theorem}[section]
\newtheorem{corollary}[theorem]{Corollary}
\theoremstyle{definition}
\newtheorem*{ack}{Acknowledgements}
\newtheorem{definition}[theorem]{Definition}
\newcommand{\rafnote}[1]{\textsf{\color{blue}\textbf{Raf Note:} #1}}
\newcommand{\rodnote}[1]{\textsf{\color{red}\textbf{Rodrigo Note:} #1}}
\renewcommand{\rafnote}[1]{} 
\renewcommand{\rodnote}[1]{} 
\begin{document}
\title[Efficient Automation of Index Pairs]
      {Efficient Automation of Index Pairs in Computational Conley Index Theory}
\author{Rafael Frongillo}
\address{Computer Science Department\\
         University of California\\
         Berkeley, CA 94720}
\email{raf@cs.berkeley.edu}
\author{Rodrigo Trevi\~no}
\address{Department of Mathematics\\
         The University of Maryland\\
         College Park, MD 20742}
\email{rodrigo@math.umd.edu}
\thanks{R.T. is supported by the Brin and Flagship Fellowships at the University of Maryland.}
\keywords{Computer-assisted proof, topological entropy, standard map, Conley index.}
\subjclass[2000]{Primary: 37M99; Secondary: 37D99}
\date{\today}
\begin{abstract}
  We present new methods of automating the construction of index
  pairs, essential ingredients of discrete Conley index theory.  These
  new algorithms are further steps in the direction of automating
  computer-assisted proofs of semi-conjugacies from a map on a
  manifold to a subshift of finite type. We apply these new algorithms
  to the standard map at different values of the perturbative
  parameter $\varepsilon$ and obtain rigorous lower bounds for its
  topological entropy for $\varepsilon\in[.7,2]$.
\end{abstract}

\maketitle

\section{Introduction}

The literature on computer-assisted proofs in dynamical systems through topological methods involves, for the most part, the investigation of the mapping properties of certain compact sets which go under different names in different settings: they are called \emph{index pairs} in Conley index theory and \emph{windows} or \emph{h-sets} in the theory of correctly aligned windows or covering relations \cite{covering1}, which are the two leading theories applied in topological, computer-assisted proofs in dynamical systems. An index pair is not necessarily an h-set and vice-versa, but they both satisfy the crucial property that they map across themselves in a way similar to the way Smale's horseshoe maps across itself. In other words, they agree in some sense with the expanding and contracting directions of the map and thus they can be thought of as slight generalizations of Markov partitions. Not surprisingly, after several conditions are met in each setting, one is able to prove a semi-conjugacy to symbolic dynamics, giving a good description the dynamics of a large set of the orbits of a dynamical system, much like one can give a good description of the dynamics of Smale's horseshoe via the easily-describable dynamics of the full 2-shift. Given an index pair, an automated procedure was described in~\cite{DFT} to prove a semi-conjugacy to a sub-shift of finite type (SFT).

The first major push towards automated validation and automated computer-assisted proofs using topological methods can be found in~\cite{DJM05}.  By \emph{validation} we mean a mechanism which upholds the claim of a theorem in rigorous way by means of finitely many calculations on a computer. Built on those and other results, \cite{DFT} achieved the complete automation of the validation part of a computer-assisted proof. In this paper we address the issue of automating the creation of index pairs in a computationally efficient way.

There are many reasons why automated proof techniques such as~\cite{DFT} come at an advantage. For example, for parameter-depending systems, automation allows for the rigorous exploration of a system at many parameters. Such applications of~\cite{DFT} have already been carried out in~\cite{raf-plats}. Besides exploration at different values of parameters, automated methods may permit us to construct index pairs where it would be otherwise impossible to do so by hand. In~\cite{DFT} computer-assisted proofs were given of a semi-conjugacy from the classical H\'{e}non map to a SFT with 247 symbols. These proofs were completely automated, that is, besides an input of initial parameters, there was no human intervention from start to finish in the computation which proved the semi-conjugacy and bounded the topological entropy.

It should be noted that the complete automation described in the previous paragraph is remarkable. To prove a semi-conjugacy using topological methods on a computer the following steps must be taken: 
\begin{enumerate}
\item Locate and identify the relevant invariant objects; 
\item construct a compact set $K$ (refered to above, in our case the index pair) whose components map onto each other in a way which is compatible with the dynamics of the system; 
\item prove that some of the dynamics of the system can be described via the mapping properties between the components of $K$. 
\end{enumerate}
Each of these steps presents its own challenges and difficulties. In \cite[\S 3.2]{DFT} step (3) was completely solved, in the sense that given any compact set (index pair) $K$ which satisfies the properties required by (2), a completely automated routine was given to prove a semi-conjugacy. Steps (1) and (2), the creation of the index pair, were also fully automated in \cite[\S 3.1]{DFT} (and~\cite{raf-plats}), largely due to the properties of ``well-behaved'' maps like the H\'{e}non map: it has a low-dimensional attractor and all its periodic orbits exhibit hyperbolic-like behavior from a computational perspective. In other words, in such cases there is a dominating invariant set, namely the attractor whose periodic orbits all seem hyperbolic and are easily localizable. Thus, it is of interest to consider the case where we do not have such underlying structure. Quoting directly from \cite[\S 5]{DFT}: ``\emph{... further analysis and optimization of the procedure described in section 3.1 for locating a region of interest should lead to even stronger results.}'' 

This article is an effort to extend the methods and techniques presented in~\cite{DFT} to more general situations with the aim of dealing with systems which do not exhibit H\'{e}non-like behavior.  As an illustration of the approach we propose, we apply our method to the standard map 
\[f_\varepsilon:(x,y)\mapsto\left(x+y+\frac{\varepsilon}{2\pi}\sin(2\pi x)\mbox{ mod 1 }, y+\frac{\varepsilon}{2\pi}\sin(2\pi x)\mbox{ mod 1 }\right),\]
for $(x,y) \in S^1\times S^1 = \mathbb{T}^1$, at different values of the perturbation parameter $\varepsilon$. By treating the perturbation parameter as an interval, we are also able to give a description of the orbits of the map in terms of symbolic dynamics for all perturbation values inside the given interval. We chose the standard map for a couple of reasons. First, it is a very well-known system with rich dynamics. Second, in contrast with previous results for H\'enon-like systems (e.g. \cite{DFT, raf-plats}), the standard map is a volume-preserving map which exhibits both hyperbolic and elliptic behavior, which make the task of automating the construction of index pairs considerably more difficult.  Finally, we generalize the methods of~\cite{DFT} to work on manifolds with nontrivial topologies and not just $\mathbb{R}^n$, and our application to the standard map exemplifies this.

In the present constructions, we benefit much from some \emph{a-priori} information of the map. The distinction between a hyperbolic or elliptic periodic orbit, for example, is a crucial one. Symmetries of the map, the knowledge of the precise location of any heteroclinic or homoclinic orbits are also valuable data from which to begin. In our view, a little previous knowledge goes a long way, as we exploit this a-priori information to make the constructions highly efficient while keeping the entire process highly automated.

We mention that besides~\cite{DFT} there have been other recent efforts towards the automation of computer-assisted proofs in dynamical systems. In particular,~\cite{jay1} is a collection of set-oriented algorithms for the creation of index pairs adapted for volume-preserving maps. In addition,~\cite{hungarian1, hungarian2} use the method of covering relations after reducing the problem of finding suitable sets and maps between sets to a problem of global optimization theory.

\rafnote{Broke the following off of the last paragraph, and threw in the top-down / bottom-up stuff.  I think it fits nicely.}

In contrast to other Conley index methods~\cite{DFT,jay1} which are \emph{top-down}, in that they start with a large portion of the phase space and try to narrow down to the invariant objects, our method is \emph{bottom-up}, in that we are guided by the dynamics of the system as we ``grow out'' the index pairs given good numerical approximations of the invariant sets (see Algorithms~\ref{alg:connections} and~\ref{alg:grow-ins}).  The goal of our bottom-up approach is to compute a minimal number of images of a discretized version of the map (see section~\ref{sec:combinatorial}), which is often the most expensive part of computations. Our approach thus achieves high efficiency in the automation of the construction of an index pair.

The present work has been motivated by the challenges brought by the standard map to the task of automating the construction of index pairs and by what we expect will be obstacles for construction of index pairs in future applications. The techniques of this paper along with those of~\cite{DFT} and the references therein provide a solid set of tools for automating computer assisted proofs in the paradigm of planar maps with positive entropy through discrete Conley index theory. These methods are technically not restricted to dimension 2, but remain mostly untested in higher dimensional examples with higher dimensional unstable bundles. To the authors' knowledge, there is an example in~\cite{sarah-kot} with a two dimensional unstable bundle, but no others in the existing computational Conley index literature. Having focused on making the construction of index pairs highly efficient in this paper, it remains to apply them to such higher dimensional examples, which we plan to do in future work.

To the authors' knowledge, the rigorous bounds presented here on the topological entropy for the standard map are the only ones of their kind in the literature. We could only find~\cite{tanikawa} for computational lower bounds of the topological entropy for the standard map at various parameter values, but the results there are non-rigorous. Thus it would be interesting to see how other topological methods (for example \cite{covering1, newhouse})  perform for the same values we have examined in this paper.

This paper is organized as follows: in section 2 we review the necessary background for the subsequent sections. Section 3 details the data structures we use in our algorithms as well as presents the two new algorithms which we consider as the main contribution of this work, Algorithms~\ref{alg:connections} and~\ref{alg:grow-ins}. We apply these algorithms in section 4 to the standard map to give lower bounds for the topological entropy at different values of the perturbative parameter. Theorem~\ref{thm:main} gives lower bounds of the topological entropy of the standard map as given by topological entropy of sub-shifts of finite type, for all $f_\varepsilon$ with $\varepsilon\in[0.7,2.0]$. Theorem~\ref{thm:half} gives a positive lower bound for the topological entropy for $\varepsilon = \frac{1}{2}$. Theorems~\ref{thm:connections2} and~\ref{thm:awesome} give a positive lower bound for the topological entropy for $\varepsilon = 2$ by finding connecting orbits to hyperbolic periodic orbits of higher period. We conclude with some comments and remarks on the implementation in Section \ref{sec:implementation}.
\begin{ack}
We would like to thank Jay James for many helpful discussions during the course of this work, as well as for his help with the implementation of the methods of~\cite{parameterization3} for the standard map, as well as Sarah Day for helpful comments on an early draft of the paper. We also thank the Center for Applied Mathematics at Cornell University for allowing us access to their computational facilities, where the computations in this paper were carried out. We would like to thank an anonymous referee for helping us make the paper clearer.
\end{ack}
\section{Background}
We review in this section the necessary facts about topological entropy, symbolic dynamics, and the discrete Conley index which will be relevant in the later sections. Although the exposition will not be in depth, we encourage the interested reader to consult section 2 of~\cite{DFT} for a more detailed exposition. For deeper treatment of the discrete Conley index, see~\cite{MM:cit}.
\subsection{Topological Entropy and Symbolic Dynamics}
Let $f: X\rightarrow X$ be a map. The \emph{topological entropy} of $f$ is the quantity $h(f)\in\mathbb{R}\cup\{\infty\}$ which is a good measure of the complexity of $f$. A map with positive topological entropy usually exhibits chaotic behavior in many of its trajectories.
\begin{definition}
Let $f:X\rightarrow X$ be a continuous map. A set $W\subset X$ is called $(n,\varepsilon)$-separated for $f$ if for any two different points $x, y\in W$, $\|f^k(x)-f^k(y)\|>\varepsilon$ for some $k$ with $0\leq k < n$. Let $s(n,\varepsilon)$ be the maximum cardinality of any $(n,\varepsilon)$-separated set. Then
$$h(f) = \sup_{\varepsilon>0}\limsup_{n\rightarrow\infty}\frac{\log s(n,\varepsilon)}{n}$$
is the \emph{topological entropy} of $f$.
\end{definition}
Although it is always well-defined, it is usually impossible to compute the topological entropy directly from the definition. The set of systems for which it is computable is rather small, but it contains all systems from symbolic dynamics.

Let $\Sigma_N = \{0,\dots,N-1\}^\mathbb{Z}$ be the set of all bi-infinite sequences on $N$ symbols. It is well-known that $\Sigma_N$ is a complete metric space. Define the \emph{full N-shift} $\sigma:\Sigma_N\rightarrow\Sigma_N$ to be the map acting on $\Sigma_N$ by $(\sigma(x))_i = x_{i+1}$. Given an $N\times N$ matrix $A$ with $A_{i,j}\in\{0,1\}$ we can define a \emph{sub-shift of finite type} defined on $\Sigma_A\subset\Sigma_N$, where $x\in\Sigma_A$ if and only if for $x = (\dots,x_i,x_{i+1},\dots)$, $A_{x_i,x_{i+1}} = 1$ for all $i$. In other words, $\Sigma_A$ consists of all sequences in $\Sigma_N$ with transitions of $\sigma$ allowed by $A$. Viewed as a graph with $N$ vertices and with an edge from vertex $i$ to vertex $j$ if and only if $A_{i,j} = 1$, $\Sigma_A$ can be identified with the set of all infinite paths in such graph.

The topological entropy of a sub-shift of finite type is computed through the largest eigenvalue of its transition matrix $A$.
\begin{theorem}
Let $\sigma:\Sigma_A\rightarrow\Sigma_A$ be a subshift of finite type. Then its topological entropy is
$$h(\sigma) = \log \mbox{sp}(A),$$
where $\mbox{sp}(A)$ denotes the spectral radius of $A$.
\end{theorem}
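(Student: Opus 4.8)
The plan is to reduce the computation of $h(\sigma)$ to a purely combinatorial growth rate — the number of admissible finite words — and then to identify that growth rate with $\log\mathrm{sp}(A)$ via the spectral radius formula. Fix any of the usual metrics $d$ on $\Sigma_N$ inducing the product topology (this plays the role of $\|\cdot\|$ in the definition of entropy); it has the two features we need: there is an $\varepsilon_0>0$ with $d(x,y)>\varepsilon_0$ whenever $x_0\neq y_0$, and for each $\varepsilon>0$ there is an integer $M=M(\varepsilon)$ with $d(x,y)\le\varepsilon$ whenever $x_i=y_i$ for all $|i|\le M$. For $n\ge 1$ let $W_n$ be the set of words $w_0\cdots w_{n-1}$ that occur as $x_0\cdots x_{n-1}$ for some $x\in\Sigma_A$, and set $b_n=\#W_n$. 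Since $(\sigma^k x)_0=x_k$, the whole argument is a squeeze of $s(n,\varepsilon)$ between two translates of the sequence $(b_n)$.

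For the upper bound, if $x,y\in\Sigma_A$ agree on the window $[-M,\,n-1+M]$ with $M=M(\varepsilon)$, then $\sigma^k x$ and $\sigma^k y$ agree on $[-M,M]$ for every $0\le k<n$, so $d(\sigma^k x,\sigma^k y)\le\varepsilon$ and the pair is not $(n,\varepsilon)$-separated; hence an $(n,\varepsilon)$-separated set contains at most one point per word of $W_{n+2M}$, giving $s(n,\varepsilon)\le b_{n+2M}$. For the lower bound, fix $\varepsilon<\varepsilon_0$ and choose for each $w\in W_n$ a point $x^{(w)}\in\Sigma_A$ realizing $w$; if $w\neq w'$ their first disagreement is at some $k\in[0,n)$, so $(\sigma^k x^{(w)})_0\neq(\sigma^k x^{(w')})_0$ and hence $d(\sigma^k x^{(w)},\sigma^k x^{(w')})>\varepsilon_0>\varepsilon$, which shows $\{x^{(w)}:w\in W_n\}$ is $(n,\varepsilon)$-separated and $s(n,\varepsilon)\ge b_n$. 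Thus $\limsup_n\frac1n\log s(n,\varepsilon)=\limsup_n\frac1n\log b_n$ for all $\varepsilon<\varepsilon_0$, and because $s(n,\varepsilon)$ is nonincreasing in $\varepsilon$ the supremum over $\varepsilon>0$ is realized there, so $h(\sigma)=\limsup_n\frac1n\log b_n$.

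It remains to prove $\limsup_n\frac1n\log b_n=\log\mathrm{sp}(A)$. If $\Sigma_A=\emptyset$ then $A$ is nilpotent and both sides take the conventional value, so assume $\Sigma_A\neq\emptyset$; then the graph of $A$ has a cycle and $\mathrm{sp}(A)\ge 1$. Each $w\in W_n$ is a path of length $n-1$ in the graph, so $b_n\le p_{n-1}:=\mathbf 1^{\top}A^{n-1}\mathbf 1=\sum_{i,j}(A^{n-1})_{i,j}$; since all matrix norms on $N\times N$ matrices are equivalent, $p_m$ is comparable to $\|A^m\|$ up to factors bounded in $m$, so Gelfand's spectral radius formula gives $p_m^{1/m}\to\mathrm{sp}(A)$ and therefore $\limsup_n\frac1n\log b_n\le\log\mathrm{sp}(A)$. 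Conversely, by the Perron--Frobenius theory (Frobenius normal form of a nonnegative matrix), $\mathrm{sp}(A)=\mathrm{sp}(A|_C)$ for some strongly connected component $C$ of the graph; since $\mathrm{sp}(A)\ge1$, $C$ carries a cycle, so every finite path inside $C$ closes up to a cycle and hence extends to a periodic bi-infinite sequence in $\Sigma_A$, whence $b_n\ge\mathbf 1^{\top}(A|_C)^{n-1}\mathbf 1$. For the irreducible nonnegative matrix $A|_C$ one has $\big(\mathbf 1^{\top}(A|_C)^{m}\mathbf 1\big)^{1/m}\to\mathrm{sp}(A|_C)=\mathrm{sp}(A)$, the sum over all entries absorbing any periodicity obstruction, so $\limsup_n\frac1n\log b_n\ge\log\mathrm{sp}(A)$ and equality holds.

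The parts I expect to carry the real weight, as opposed to the routine window bookkeeping, are: (i) Gelfand's spectral radius formula $\lim_m\|A^m\|^{1/m}=\mathrm{sp}(A)$, which supplies the only genuinely analytic input; and (ii) the reduction, via the Frobenius normal form, to a strongly connected subgraph on which $\mathrm{sp}(A)$ is attained and every finite path provably extends to a bi-infinite one — this is exactly what lets the crude lower bound $s(n,\varepsilon)\ge b_n$ be matched against $\mathrm{sp}(A)$ rather than against the possibly smaller growth coming from transient states. Everything else reduces to elementary estimates with the metric on $\Sigma_N$ and the constant $M(\varepsilon)$.
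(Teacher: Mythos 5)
The paper states this theorem without proof; it is cited as a standard background fact from symbolic dynamics (see e.g.\ Lind--Marcus or Robinson), so there is no in-paper argument to compare against. Your proof is correct and is essentially the textbook argument: the reduction of $h(\sigma)$ to the exponential growth rate of admissible words via the squeeze $b_n \le s(n,\varepsilon) \le b_{n+2M(\varepsilon)}$ for small $\varepsilon$ is standard, and your identification of $\limsup_n \tfrac1n\log b_n$ with $\log\mathrm{sp}(A)$ handles the two genuine subtleties correctly. In particular you were right to worry about (a) admissible words being paths in the graph that actually extend to bi-infinite sequences, since for a reducible $A$ most paths do not extend, and (b) nonprimitivity inside the dominating irreducible block; restricting to a strongly connected component $C$ carrying $\mathrm{sp}(A)$ resolves (a) because strong connectivity lets any finite path extend in both directions, and using the entry sum $\mathbf 1^{\top}(A|_C)^m\mathbf 1$, which is norm-equivalent to $\|(A|_C)^m\|$, resolves (b) without needing primitivity, since Gelfand's formula is insensitive to equivalent-norm constants. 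One small nit: the clause ``every finite path inside $C$ closes up to a cycle'' is not quite the reason the path extends bi-infinitely — strong connectivity of $C$ with $\mathrm{sp}(A|_C)\ge1$ (so $C$ has an edge) is all you need, as you can always prepend and append arbitrarily within $C$ — but your conclusion is correct. The $\Sigma_A=\varnothing$ case is a matter of convention (entropy $-\infty$ versus $0$ for the empty system, and $\log 0=-\infty$ for a nilpotent $A$), and is harmless here since the paper only uses the theorem on nonempty subshifts.
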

In order to study a map $f$ of high complexity, it is sometimes possible to study a subsystem of it through symbolic dynamics. This is done through a semi-conjugacy.
\begin{definition}
Let $f:X\rightarrow X$ and $g:Y\rightarrow Y$ be continuous maps. A \emph{semi-conjugacy} from $f$ to $g$ is a continuous surjection $h:X\rightarrow Y$ with $h\circ f = g\circ h$. We say $f$ is semi-conjugate to $g$ if there exists a semi-conjugacy.
\end{definition}
Note that through a semi-conjugacy $h:X\rightarrow Y$, information about $g$ acting on $Y$ gives information about $f$ acting on $x$. In particular, the topological entropy of $g$ bounds from below the topological entropy of $f$.
\begin{theorem}
Let $f$ be semi-conjugate to $g$. Then $h(f)\geq h(g)$.
\end{theorem}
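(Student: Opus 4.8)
The plan is to lift separated sets from $Y$ to $X$ along the semi-conjugacy $h$ without losing cardinality, and then push the resulting inequality through the $\limsup$ and $\sup$ in the definition of topological entropy. Throughout I will assume, as is the case in our applications where the domain is a compact manifold or a compact invariant set, that the continuous surjection $h:X\to Y$ is uniformly continuous; this is the one place compactness enters.

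First I would fix $\varepsilon>0$ and, using uniform continuity of $h$, choose $\delta>0$ so small that $d_X(x,x')\le\delta$ implies $d_Y(h(x),h(x'))\le\varepsilon$; equivalently, $d_Y(h(x),h(x'))>\varepsilon$ forces $d_X(x,x')>\delta$. Next, given an $(n,\varepsilon)$-separated set $W\subseteq Y$ for $g$ of maximal cardinality $s_g(n,\varepsilon)$, I would use surjectivity of $h$ to pick for each $w\in W$ a preimage $\tilde w\in h^{-1}(w)$, and set $\widetilde W=\{\tilde w : w\in W\}$. Distinct points of $W$ have distinct preimages, so $|\widetilde W|=|W|=s_g(n,\varepsilon)$.

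The key step is to verify that $\widetilde W$ is $(n,\delta)$-separated for $f$. Iterating the intertwining relation $h\circ f=g\circ h$ gives $h\circ f^k=g^k\circ h$ for all $k$. Now take $\tilde w_1\ne\tilde w_2$ in $\widetilde W$, arising from $w_1\ne w_2$ in $W$, and pick $k$ with $0\le k<n$ and $d_Y(g^k(w_1),g^k(w_2))>\varepsilon$. Then $g^k(w_i)=g^k(h(\tilde w_i))=h(f^k(\tilde w_i))$, so $d_Y\big(h(f^k(\tilde w_1)),h(f^k(\tilde w_2))\big)>\varepsilon$, and hence $d_X(f^k(\tilde w_1),f^k(\tilde w_2))>\delta$ by the choice of $\delta$. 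This shows $s_f(n,\delta)\ge|\widetilde W|=s_g(n,\varepsilon)$.

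Finally I would take logarithms, divide by $n$, and let $n\to\infty$ to get $\limsup_n \frac{\log s_f(n,\delta)}{n}\ge\limsup_n \frac{\log s_g(n,\varepsilon)}{n}$. Since the left-hand side is at most $h(f)$ by definition, $h(f)\ge\limsup_n \frac{\log s_g(n,\varepsilon)}{n}$ for every $\varepsilon>0$, and taking the supremum over $\varepsilon>0$ yields $h(f)\ge h(g)$. The only genuine obstacle is the metric bookkeeping at the outset — producing a single $\delta$ that works uniformly — which is exactly what uniform continuity of $h$ provides; everything after that is a formal manipulation of the definitions.
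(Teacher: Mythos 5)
Your proof is correct, and it is the standard argument. The paper states this result as background without supplying a proof, so there is no in-text argument to compare against; your lift-separated-sets construction via uniform continuity and the intertwining relation $h\circ f^k = g^k\circ h$ is exactly the classical Bowen-style proof one finds in textbooks (e.g.\ Walters or Katok--Hasselblatt). You were also right to flag that the inequality genuinely needs uniform continuity of the semi-conjugacy, which in turn relies on compactness of the domain; the paper leaves this hypothesis implicit, as it only ever applies the theorem to compact isolated invariant sets and compact subshift spaces. One small point worth making explicit in the final step: $s(n,\varepsilon)$ is monotone nonincreasing in $\varepsilon$, so the outer supremum in the definition of $h(f)$ is actually a limit as $\varepsilon\to 0^+$; this guarantees that the $\delta = \delta(\varepsilon)\to 0$ you obtain from uniform continuity suffices to exhaust the supremum on the left-hand side. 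With that observation spelled out, the chain $h(f)\geq\limsup_n\frac{\log s_f(n,\delta)}{n}\geq\limsup_n\frac{\log s_g(n,\varepsilon)}{n}$ followed by $\sup_{\varepsilon>0}$ closes the argument cleanly.
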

\begin{corollary}
Let $f$ be semi-conjugate to $\sigma:\Sigma_A\rightarrow\Sigma_A$, for some $N\times N$ matrix $A$. Then
$$h(f) \geq \mbox{sp}(A).$$
\end{corollary}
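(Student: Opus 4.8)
The plan is simply to chain together the two preceding theorems. First I would invoke the theorem on semi-conjugacies with $g = \sigma : \Sigma_A \to \Sigma_A$: since $f$ is semi-conjugate to $\sigma$ by hypothesis, that theorem gives immediately $h(f) \geq h(\sigma)$. Next I would apply the theorem computing the entropy of a subshift of finite type, which yields $h(\sigma) = \log \mathrm{sp}(A)$. Substituting this identity into the previous inequality produces $h(f) \geq \log \mathrm{sp}(A)$, which is the assertion (the displayed inequality in the statement should read $\log \mathrm{sp}(A)$, matching the entropy theorem quoted just above it).

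There is essentially no obstacle here: the corollary is a one-line consequence of results already in hand, and the only things worth checking are that the hypotheses of both invoked theorems are satisfied — namely that $\sigma$ is a continuous map on $\Sigma_A$, which holds since $\Sigma_A$ is a closed (hence compact) subset of the compact metric space $\Sigma_N$ and $\sigma$ restricts to it, and that $A$ is a matrix with entries in $\{0,1\}$ so that $\Sigma_A$ is a genuine SFT. It is perhaps worth remarking that this corollary is precisely the mechanism by which all of the entropy lower bounds later in the paper are obtained: one constructs an index pair, extracts from it a semi-conjugacy of (an iterate or restriction of) $f_\varepsilon$ to some $\sigma : \Sigma_A \to \Sigma_A$ using the validation machinery of~\cite{DFT}, and then reads off $\log \mathrm{sp}(A)$ as a rigorous lower bound for the topological entropy — this is the shape of the arguments behind Theorems~\ref{thm:main}, \ref{thm:half}, \ref{thm:connections2}, and~\ref{thm:awesome}.
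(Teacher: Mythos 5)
Your proof is correct and is exactly the intended (unwritten) argument in the paper: apply the semi-conjugacy theorem to get $h(f)\geq h(\sigma)$ and the SFT-entropy theorem to get $h(\sigma)=\log\mathrm{sp}(A)$. You are also right that the displayed inequality in the corollary as printed is missing a $\log$; it should read $h(f)\geq\log\mathrm{sp}(A)$.
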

\subsection{The Discrete Conley Index}
Let $f:M\rightarrow M$ be a continuous map, where $M$ is a smooth, orientable manifold.
\begin{definition}
A compact set $K\subset M$ is an \emph{isolating neighborhood} if 
$$\mbox{Inv}(K,f)\subset\mbox{Int}(K),$$
where $\mbox{Inv}(K,f)$ denotes the maximal invariant set of $K$ and $\mbox{Int}(K)$ denotes the interior of $K$. $S$ is an \emph{isolated invariant set} if $S = \mbox{Inv}(K,f)$ for some isolating neighborhood $K$.
\end{definition}
\begin{definition}[~\cite{salamon}]
Let $S$ be an isolated invariant set for $f$. Then $P = (P_1,P_0)$ is an \emph{index pair} for $S$ if
\begin{enumerate}
\item $\overline{P_1\backslash P_0}$ is an isolating neighborhood for $S$.
\item The induced map 
\[
  f_P(x)=\left\{
    \begin{array}{ll}
      f(x) & \mbox{ if $x$, $f(x)\in P_1/P_0$ },\\
      \mbox{$[P_0]$} & \mbox{ otherwise}.
    \end{array}\right.
\]
defined on the pointed space $(P_1/P_0,[P_0])$ is continuous.
\end{enumerate}
\end{definition}
\begin{definition}[~\cite{BobWilliams}]
Let $G, H$ be abelian groups and $\varphi:G\rightarrow G$, $\psi:H\rightarrow H$ homomorphisms. Then $\varphi$ and $\psi$ are \emph{shift equivalent} if there exist homomorphisms $r:G\rightarrow H$ and $s:H\rightarrow G$ and a constant $k\in\mathbb{N}$ such that
$$r\circ\varphi = \psi\circ r,\;\; s\circ\psi = \varphi\circ s,\;\; r\circ s = \psi^k,\;\;\mbox{ and }s\circ r = \varphi^k.$$ 
\end{definition}
Shift equivalence defines an equivalence relation, and we denote by $[\varphi]_s$ the class of all homomorphisms which are shift equivalent to $\varphi$.
\begin{definition}[~\cite{FranksRicheson}]
Let $P = (P_1,P_0)$ be an index pair for an isolated invariant set $S = \mbox{Inv}(\overline{P_1\backslash P_0}, f)$ and let $f_{P*}:H_*(P_1,P_0;\mathbb{Z})\rightarrow H_*(P_1,P_0
;\mathbb{Z})$ be the map induced by $f_P$ on the relative homology groups $H_*(P_1,P_0;\mathbb{Z})$. The \emph{Conley index} of $S$ is the shift equivalence class $[f_{P*}]_s$ of $f_{P*}$.
\end{definition}
One can think of two maps $f_{P*}$ and $g_{\bar{P}*}$ as being in the same shift equivalence class if and only if they have the same assymptotic behavior. Since an index pair for an isolated invariant set is not unique, the Conley index of and isolated invariant set does not depend on the choice of index pair. There are two elementary yet important results which link the Conley index of an isolated invariant set with the dynamics of $f$. The first one is the so-called Wa\.zewski property:
\begin{theorem}
If $[f_{P*}]_s\neq [0]_s$, then $S\neq\varnothing$.
\end{theorem}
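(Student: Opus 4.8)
The plan is to prove the contrapositive: assuming $S=\varnothing$, we show that $f_{P*}$ is nilpotent, and since a nilpotent homomorphism is shift equivalent to the zero homomorphism, this gives $[f_{P*}]_s=[0]_s$.

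Write $N=\overline{P_1\setminus P_0}$, so that $S=\mbox{Inv}(N,f)$ by hypothesis. The first step is to upgrade $S=\varnothing$ to the statement that the forward‑invariant set $\mbox{Inv}^+(N,f):=\{x : f^i(x)\in N \text{ for all } i\geq 0\}$ is also empty. Indeed, if some $x$ had its whole forward orbit in $N$, then by compactness of $N$ its $\omega$‑limit set $\omega(x)$ would be a nonempty compact set contained in $N$ with $f(\omega(x))=\omega(x)$; in particular every point of $\omega(x)$ admits a full orbit staying in $N$, so $\varnothing\neq\omega(x)\subseteq\mbox{Inv}(N,f)$, a contradiction. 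Now consider the nested sequence of compact sets $N_k=\{x\in N : f^i(x)\in N\text{ for }0\leq i\leq k\}$; since $\bigcap_k N_k=\mbox{Inv}^+(N,f)=\varnothing$, the finite intersection property yields an $n$ with $N_n=\varnothing$. That is, no point has a forward orbit of length $n+1$ lying entirely in $N$.

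Next I would unwind the induced map $f_P$ on $(P_1/P_0,[P_0])$: an easy induction on $m$ shows that $f_P^{\,m}([x])=[f^m(x)]$ if $f^i(x)\in P_1\setminus P_0$ for all $0\leq i\leq m$, and $f_P^{\,m}([x])=[P_0]$ otherwise (here continuity requirement (2) in the definition of index pair is what guarantees $f_P$ is a well‑defined self‑map of the pointed space, so that the iterates make sense). Because $P_1\setminus P_0\subseteq N$, the bound $N_n=\varnothing$ forces $f_P^{\,n}$ to be the constant map onto the basepoint $[P_0]$. A constant map to the basepoint induces the zero homomorphism on $H_*(P_1,P_0;\mathbb{Z})\cong\widetilde H_*(P_1/P_0;\mathbb{Z})$, whence $(f_{P*})^n=(f_P^{\,n})_*=0$.

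It remains only to note that a homomorphism $\varphi$ with $\varphi^n=0$ is shift equivalent to $0$ via the trivial group, $r=s=0$, and lag $k=n$, for which all four identities in the definition of shift equivalence hold trivially. Thus $[f_{P*}]_s=[0]_s$, as desired. I expect the main obstacle to be the first paragraph's compactness argument — converting the qualitative emptiness of $S$ into the uniform iteration bound $N_n=\varnothing$ — together with the minor care needed in handling $P_1\setminus P_0$ versus its closure $N$; the homological and algebraic steps afterward are formal. (Alternatively, one could bypass the hands‑on argument entirely: $(\varnothing,\varnothing)$ is itself an index pair for the isolated invariant set $\varnothing$, with visibly zero induced map on trivial relative homology, and the independence of the Conley index from the choice of index pair — recorded above — immediately gives $[f_{P*}]_s=[0]_s$.)
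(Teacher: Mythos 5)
The paper states this as the Ważewski property and does not prove it; it is cited implicitly as a classical result of Conley index theory (see e.g.\ the paper's references to Mischaikow--Mrozek and Franks--Richeson for the underlying theory). Your proof is a correct and self-contained reconstruction of the standard argument, and the approach is the one usually given: pass to the contrapositive, show nilpotency of $f_{P*}$, and observe that any nilpotent endomorphism is shift equivalent to $0$.

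To verify the details: the compactness step is sound, since $N=\overline{P_1\setminus P_0}\subseteq P_1$ is compact, the $\omega$-limit set of any point with forward orbit in $N$ is a nonempty, fully invariant compact subset of $N$ (forward invariance from continuity, backward orbits from surjectivity of $f$ on $\omega(x)$), and hence lands inside $S=\varnothing$, a contradiction. The sets $N_k = N\cap f^{-1}(N)\cap\cdots\cap f^{-k}(N)$ are closed in the compact $N$ and nest down to $\mathrm{Inv}^+(N,f)=\varnothing$, so the finite intersection property gives $N_n=\varnothing$ for some $n$. Your explicit inductive formula for $f_P^{\,m}$ follows directly from the piecewise definition of $f_P$ (using that the basepoint is a fixed point), and with $N_n=\varnothing$ and $P_1\setminus P_0\subseteq N$ it forces $f_P^{\,n}$ to be the constant map to $[P_0]$. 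A constant map to the basepoint induces $0$ on $\widetilde H_*(P_1/P_0)\cong H_*(P_1,P_0)$, so $(f_{P*})^n=0$, and your choice $H=0$, $r=s=0$, lag $k=n$ witnesses shift equivalence to $0$. The alternative you sketch in parentheses --- that $(\varnothing,\varnothing)$ is an index pair for $S=\varnothing$ with zero induced map, so by the uniqueness of the Conley index $[f_{P*}]_s=[0]_s$ --- is also valid and is arguably the cleanest route once one takes the paper's remark that ``the Conley index of an isolated invariant set does not depend on the choice of index pair'' as given, though that invariance is itself a nontrivial theorem which your direct argument avoids invoking.
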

Since similar assymptotic behavior relates two different maps in the same shift equivalence class, it is sufficient then to have a map $f_{P*}$ not be nilpotent in order to have a non-empty isolated invariant set. In practice, non-nilpotency can be verified by taking iterates of a representative of $[f_{P*}]_s$ until non-nilpotent behavior is detected. We also have an elementary detection method of periodic points.
\begin{theorem}[Lefshetz Fixed Point Theorem]
Let $f_*$ be a representative of $[f_{P*}]_s$ with maps $f_k:H_k(P_1,P_0;\mathbb{Z})\rightarrow H_k(P_1,P_0;\mathbb{Z})$ which are represented by matrices. Then if
$$\Lambda(f_*) = \sum_{k\geq 0}(-1)^k\mbox{ tr }f_k\neq 0,$$
then $f$ has a fixed point. Moreover, if $\Lambda(f_*^n)\neq 0$, then $f$ has a periodic point of period $n$.
\end{theorem}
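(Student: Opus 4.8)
The plan is to reduce the statement to the classical Lefschetz--Hopf fixed point theorem for the pointed self-map $f_P$ of the compact ANR $P_1/P_0$, together with a local computation of the fixed point index at the basepoint, after disposing of two bookkeeping issues: the dependence on the chosen representative $f_*$, and the fact that $f_P$ always fixes the basepoint $[P_0]$. First I would check that $\Lambda(\varphi^n)$ depends only on the shift-equivalence class of $\varphi$. If homomorphisms $r\colon G\to H$, $s\colon H\to G$ and $k\in\mathbb{N}$ realize a shift equivalence between $\varphi$ and $\psi$, then for $m\ge k$, using $\psi^m=\psi^{m-k}(rs)$, the cyclicity of trace, and the intertwining relation $s\psi=\varphi s$, one gets $\operatorname{tr}\psi^m=\operatorname{tr}(\psi^{m-k}rs)=\operatorname{tr}(s\psi^{m-k}r)=\operatorname{tr}(\varphi^{m-k}sr)=\operatorname{tr}\varphi^m$; and since the trace of an endomorphism of a finitely generated abelian group is the trace of the induced automorphism on the eventual image $\bigcap_n\varphi^nG$ (the nilpotent part contributes nothing) and shift-equivalent maps have isomorphic eventual-image automorphisms, in fact $\operatorname{tr}\varphi^m=\operatorname{tr}\psi^m$ for \emph{every} $m\ge1$. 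Applying this in each degree to $f_*$ and $f_{P*}$ with $m=n$ gives $\Lambda(f_*^n)=\Lambda(f_{P*}^n)$, so it suffices to argue with $f_{P*}$ itself.

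Next I would set up the dictionary between fixed points of the pointed map $g:=f_P^n\colon(P_1/P_0,[P_0])\to(P_1/P_0,[P_0])$ and periodic points of $f$. Since $f_P$ fixes the basepoint and, once an orbit of $f_P$ reaches $[P_0]$ it stays there, a fixed point $[x]\neq[P_0]$ of $g$ forces $f^j(x)\in P_1\setminus P_0$ for $0\le j<n$ and $f^n(x)=x$; the resulting periodic orbit of $f$, of period dividing $n$, then lies in $P_1\setminus P_0\subset\overline{P_1\setminus P_0}$, hence inside $\operatorname{Inv}(\overline{P_1\setminus P_0},f)=S$. So the theorem follows once we show: if $\Lambda(f_{P*}^n)\neq0$ then $g$ has a fixed point other than $[P_0]$ (in particular $f$ has a periodic point in $S$, and for $n=1$ a fixed point in $S$). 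Equivalently, contrapositively, I must show that $\operatorname{Fix}(g)=\{[P_0]\}$ implies $\Lambda(f_{P*}^n)=0$.

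For this I would invoke Lefschetz--Hopf on $X:=P_1/P_0$, which in our cubical setting is a finite CW complex, hence a compact ANR: $\Lambda(g;H_*(X;\mathbb{Z}))=\sum_{p\in\operatorname{Fix}(g)}\operatorname{ind}(g,p)=\operatorname{ind}(g,[P_0])$. The canonical splitting $H_*(X)\cong H_*(P_1,P_0)\oplus H_*(\mathrm{pt})$, on which $g$ acts as the identity on the point summand, gives $\Lambda(g;H_*(X))=\Lambda(f_{P*}^n)+1$. Thus it remains to prove $\operatorname{ind}(g,[P_0])=1$. Here one uses that the basepoint is a strong local attractor for $f_P$: there is a neighborhood $U$ of $P_0$ in $P_1$ on which $f_P$ is constantly $[P_0]$, so $f_P^n$ is constantly $[P_0]$ on $U$ as well; then $g$ coincides near $[P_0]$ with the constant map, whence $\operatorname{ind}(g,[P_0])=\operatorname{ind}(\mathrm{const},[P_0])=1$. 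Combining the three steps gives $\Lambda(f_{P*}^n)=0$, as required.

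I expect the last step --- the local analysis at the basepoint --- to be the main obstacle. For an arbitrary Salamon index pair, $f_P$ need not be literally constant on a neighborhood of $[P_0]$: points of $P_0$ whose $f$-image lands on the topological boundary of $P_1\setminus P_0$ may be approximated by points that $f$ does not collapse. One therefore either replaces $P$ by a more regular (e.g.\ ``thin'') index pair for the same $S$ --- legitimate because the Conley index, and hence by the first step all the Lefschetz numbers, is independent of the choice of index pair --- or, equivalently, establishes the identification $\Lambda(f_{P*})=\operatorname{ind}\bigl(f,\operatorname{Int}\overline{P_1\setminus P_0}\bigr)$ of the Lefschetz number of the index map with the fixed point index of $f$ on the open isolating neighborhood, which vanishes by the additivity axiom of the fixed point index when $f$ has no fixed point there. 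Everything else is formal bookkeeping.
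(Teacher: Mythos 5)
The paper does not actually prove this theorem---it is quoted as a background fact of discrete Conley index theory (the reader is pointed to the references for deeper treatment)---so your sketch has to be judged on its own. The skeleton you give is the standard one and its bookkeeping steps are correct: the invariance of $\operatorname{tr}\varphi^m$ under shift equivalence for $m\ge k$ via cyclicity is right, and the extension to all $m\ge1$ via conjugacy of the eventual-image automorphisms (after tensoring with $\mathbb{Q}$ so the nilpotent part can be split off and discarded) is sound; the translation between fixed points of $f_P^n$ away from $[P_0]$ and $n$-periodic orbits of $f$ inside $P_1\setminus P_0\subset S$ is correct; and the Lefschetz--Hopf reduction on the finite CW complex $P_1/P_0$, together with the splitting $H_*(P_1/P_0)\cong H_*(\mathrm{pt})\oplus H_*(P_1,P_0)$, correctly reduces everything to showing $\operatorname{ind}(g,[P_0])=1$.

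That last step is the genuine gap, and it is more than ``formal bookkeeping.'' As you notice in your final paragraph, for a general Salamon index pair $f_P$ need not be constant on any neighborhood of $[P_0]$; in fact $[P_0]$ need not even be interior to $f_P^{-1}([P_0])$, so neither the constant-map index computation nor a quick attracting-fixed-point argument applies directly. Both of your proposed fixes---replacing $P$ by a more regular index pair in which the exit set carries a collar on which $f_P$ really is the collapse, or establishing the identity $\Lambda(f_{P*})=\operatorname{ind}\bigl(f,\operatorname{Int}\overline{P_1\setminus P_0}\bigr)$ and invoking additivity of the fixed point index---are exactly what the literature does, but either one is a theorem in its own right (essentially Mrozek's Lefschetz theorem for the discrete Conley index), not a routine remark. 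So: correct strategy, correct localization of where the difficulty sits, but without citing or supplying that result the proof is incomplete at precisely the point you flag.
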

Since traces are preserved under shift equivalence, $\Lambda(f_*)$ is independent of the representative of $[f_{P*}]_s$, and we may even denote it as $\Lambda([f_{P*}]_s)$. 
\begin{corollary}
\label{cor:fixed}
Let $K\subset M$ be the finite union of disjoint, compact sets $K_1,\dots, K_m$ and let $S = \mbox{Inv}(K,f)$. Let $S' = \mbox{Inv}(K_1,f_{K_m}\circ\cdots\circ f_{K_1})\subset S$ where $f_{K_i}$ denotes the restriction of $f$ to $K_i$. If
$$\Lambda([f_{K_m}\circ\cdots\circ f_{K_1}]_s)\neq 0$$
then $f_{K_m}\circ\cdots\circ f_{K_1}$ contains a fixed point in $S'$ which corresponds to a periodic orbit of $f$ which travels through $K_1,\dots, K_m$ in such order.
\end{corollary}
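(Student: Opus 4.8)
The plan is to read the statement as a direct application of the Lefschetz Fixed Point Theorem to the composed ``return'' map $g:=f_{K_m}\circ\cdots\circ f_{K_1}$, regarded as an index-pair map on $K_1$. The preliminary step, which carries essentially all of the technical weight, is to assemble from the index pairs $P^i=(P^i_1,P^i_0)$ on the pieces $K_i$ (these underlie the notation $[f_{K_m}\circ\cdots\circ f_{K_1}]_s$ in the first place) a single index pair for $g$. Concretely, I would check that $P:=(P^1_1,P^1_0)$, with $\overline{P^1_1\setminus P^1_0}=K_1$, is an index pair for $S'=\mathrm{Inv}(K_1,g)$: the isolation requirement follows from the $K_i$ being pairwise disjoint isolating neighborhoods, and the induced map $g_P$ on the pointed quotient $(P^1_1/P^1_0,[P^1_0])$ is continuous because it is the composition of the continuous induced maps of the $P^i$. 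The homology map $g_{P*}$ on $H_*(P^1_1,P^1_0;\mathbb{Z})$ is then, essentially by construction, a representative of $[f_{K_m}\circ\cdots\circ f_{K_1}]_s$, so that $\Lambda(g_{P*})=\Lambda([f_{K_m}\circ\cdots\circ f_{K_1}]_s)$.

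Granting this, the Lefschetz Fixed Point Theorem applied to $g$ with the index pair $P$ produces, from the hypothesis $\Lambda([f_{K_m}\circ\cdots\circ f_{K_1}]_s)\neq 0$, a genuine fixed point $x$ of $g$. By the definition of an induced map a non-basepoint fixed point lies in $P^1_1\setminus P^1_0\subseteq K_1$, and unwinding the composition $g=f_{K_m}\circ\cdots\circ f_{K_1}$ shows $x\in K_1$, $f(x)\in K_2,\ \dots,\ f^{m-1}(x)\in K_m$ and $f^m(x)=g(x)=x$ (here one uses that each $f_{K_i}$ agrees with $f$ precisely on the part of its domain whose image avoids the collapsed set). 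Hence $x$ is a periodic point of $f$ whose orbit $x,f(x),\dots,f^{m-1}(x)$ passes through $K_1,\dots,K_m$ in exactly this cyclic order; since the $K_i$ are pairwise disjoint these are $m$ distinct points, so the orbit genuinely ``travels through $K_1,\dots,K_m$'' as asserted.

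Finally I would place this orbit in the two invariant sets named in the statement. As $g(x)=x$, the constant sequence through $x$ is a full $g$-orbit contained in $K_1$, so $x\in\mathrm{Inv}(K_1,g)=S'$, which gives the fixed point ``in $S'$''; running the same argument from an arbitrary point of $S'$ and using $K_1\cup\cdots\cup K_m\subseteq K$ shows $S'\subseteq S=\mathrm{Inv}(K,f)$, so the $f$-orbit of $x$ lies in $S$. The one real obstacle is the first paragraph --- checking that the components of the $K_1$-index pair really do form an index pair for the composed map $g$ and that its homological induced map represents $[f_{K_m}\circ\cdots\circ f_{K_1}]_s$; once that index-pair bookkeeping is settled, the corollary follows formally from the Lefschetz Fixed Point Theorem together with unwinding the definition of an induced map.
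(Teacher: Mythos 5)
The paper states this corollary without proof, presenting it as an immediate consequence of the Lefschetz Fixed Point Theorem stated just above; your argument is therefore reconstructing a proof the authors left implicit. Your approach---assembling an index pair on $K_1$ for the return map $g=f_{K_m}\circ\cdots\circ f_{K_1}$ by composing the induced quotient maps of the $P^i$, checking $g_{P*}$ represents $[f_{K_m}\circ\cdots\circ f_{K_1}]_s$, applying Lefschetz to obtain a fixed point of $g$, and then unwinding the composition to read off a period-$m$ orbit of $f$ with itinerary $K_1,\dots,K_m$---is exactly the standard argument and surely what the authors had in mind. The one place you correctly flag as carrying the technical weight, namely that the composed induced map on the pointed quotient is an index-pair map whose homology representative lies in the right shift-equivalence class, is a known fact from the computational Conley index literature (it underlies Corollary~\ref{cor:Waz} as well and is used freely throughout~\cite{DFT}); your treatment of it as a lemma to be cited rather than reproved is appropriate. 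Your placement of the fixed point in $S'$ and your observation that disjointness of the $K_i$ forces the $m$ orbit points to be distinct are both correct. No gaps.
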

This is a strong and useful tool for proving the existence of periodic orbits. However, we may have $\Lambda([f_{K_m}\circ\cdots\circ f_{K_1}]_s)= 0$ while $(f_{K_m}\circ\cdots\circ f_{K_1})_*$ is not nilpotent and thus have an invariant set which may be of interest since it behaves like a periodic orbit. So we have an analogous result based on the Wa\.zewski property.
\begin{corollary}
\label{cor:Waz}
Let $K\subset M$ be the finite union of disjoint, compact sets $K_1,\dots, K_m$ and let $S = \mbox{Inv}(K,f)$. Let $S' = \mbox{Inv}(K_1,f_{K_m}\circ\cdots\circ f_{K_1})\subset S$ where $f_{K_i}$ denotes the restriction of $f$ to $K_i$. If
$$[(f_{K_m}\circ\cdots\circ f_{K_1})_*]_s \neq [0]_s,$$
then $S'$ is nonempty. Moreover, there is a point in $S$ whose trajectory visits the sets $K_i$ in such order.
\end{corollary}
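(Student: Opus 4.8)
The plan is to deduce this exactly as Corollary~\ref{cor:fixed} is deduced from the Lefschetz fixed point theorem, the only change being that we now test non-triviality of a shift equivalence class in place of non-vanishing of a Lefschetz number. Write $g = f_{K_m}\circ\cdots\circ f_{K_1}$; this is a continuous map whose domain is the set of $x\in K_1$ with $f^{j}(x)\in K_{j+1}$ for $1\le j\le m-1$ (so that each restriction in the composition is applied on its domain). The hypotheses already presuppose that the shift equivalence class $[(f_{K_m}\circ\cdots\circ f_{K_1})_*]_s$ is defined, i.e. that $S' = \mbox{Inv}(K_1,g)$ is an isolated invariant set for $g$; concretely, since $K_1,\dots,K_m$ are compact and pairwise disjoint, $K_1$ (or the closure of an appropriate subset of it) isolates $S'$ as a maximal invariant set for $g$.

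Next I would assemble an index pair for $S'$ from index pairs $P^i = (P^i_1,P^i_0)$ for the individual pieces. Disjointness of the $K_i$ lets one take the $P^i$ disjoint, and $f$ (together with its combinatorial enclosure from Section~\ref{sec:combinatorial}) induces for each $i$ a transition homomorphism $H_*(P^i_1,P^i_0;\mathbb{Z})\to H_*(P^{i+1}_1,P^{i+1}_0;\mathbb{Z})$, indices read mod $m$. The composite index pair $P$ obtained this way has $g_{P*}$ shift equivalent to the cyclic composition of these transition maps, which is precisely the endomorphism $(f_{K_m}\circ\cdots\circ f_{K_1})_*$ of $H_*(P^1_1,P^1_0;\mathbb{Z})$ figuring in the hypothesis. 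Hence, by the definition of the Conley index, the Conley index of $S'$ (for the map $g$) equals $[g_{P*}]_s = [(f_{K_m}\circ\cdots\circ f_{K_1})_*]_s$, which by assumption is not $[0]_s$.

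Applying the Wa\.zewski property stated above to $g$ and $S'$ now yields $S'\neq\varnothing$. It remains to check $S'\subset S$ and that a point of $S'$ realizes the prescribed itinerary. Fix $x\in S'$, and choose a full $g$-orbit through $x$ contained in $K_1$. For each point $z$ of this orbit, $g(z)$ being defined forces $f^{j}(z)\in K_{j+1}$ for $1\le j\le m-1$ and $f^m(z)=g(z)\in K_1$; unwinding this both forward and backward along the $g$-orbit shows that the full $f$-orbit of $x$ passes cyclically through $K_1,\dots,K_m$ in that order, with $x$ sitting in a $K_1$-slot. In particular this $f$-orbit stays in $K = K_1\cup\cdots\cup K_m$, so $x\in\mbox{Inv}(K,f) = S$, which gives both the inclusion $S'\subset S$ and the ``moreover'' clause.

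The main obstacle is the second paragraph: identifying $g_{P*}$, up to shift equivalence, with the cyclic composite of the transition maps between the relative homologies of the separate pieces. Choosing mutually compatible index pairs for the $K_i$, verifying that their disjointness allows them to be glued into an index pair for $S'$ under $g$, and checking the shift equivalence with the resulting block/transition structure is the technical heart of the argument; this is where one relies on the index-pair machinery of~\cite{FranksRicheson} and the combinatorial framework of Section~\ref{sec:combinatorial}. Everything else is formal, and once this identification is in hand the statement follows from the Wa\.zewski property verbatim as above.
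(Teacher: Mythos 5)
The paper does not actually prove this corollary; it is stated (together with Corollary~\ref{cor:fixed}) as a direct consequence of the Wa\.zewski property and the Conley index machinery set up just above, so there is no in-paper argument to compare against. That said, your proof plan is correct and is in substance the natural one: set $g = f_{K_m}\circ\cdots\circ f_{K_1}$; observe that the hypothesis already presupposes $S'=\mbox{Inv}(K_1,g)$ is isolated for $g$ (with $K_1$ as the isolating neighborhood) so that $[(f_{K_m}\circ\cdots\circ f_{K_1})_*]_s$ is its Conley index; invoke the Wa\.zewski property to conclude $S'\neq\varnothing$; and then unwind the $g$-orbit of any $x\in S'$ forward and backward to read off the cyclic itinerary $K_1,K_2,\dots,K_m,K_1,\dots$, giving both $x\in\mbox{Inv}(K,f)=S$ and the ``moreover'' clause. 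Disjointness and compactness of the $K_i$ is exactly what makes the itinerary forced (the restrictions $f_{K_i}$ have domains chopped so that $g$ is defined precisely when the prescribed transitions occur) and also what guarantees $\mbox{Int}(K)\cap K_1 = \mbox{Int}(K_1)$, so that $K_1$ isolates $S'$ for $g$ once $K$ isolates $S$ for $f$. The one genuinely nontrivial step is the one you flag: that the composite index map $g_{P*}$ on $H_*(P^1_1,P^1_0;\mathbb{Z})$ is, up to shift equivalence, the cyclic composition of the transition homomorphisms $H_*(P^i_1,P^i_0;\mathbb{Z})\to H_*(P^{i+1}_1,P^{i+1}_0;\mathbb{Z})$. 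This is the standard composition/decomposition property for index pairs over disjoint blocks, and you are right that it is where the argument genuinely uses the index-pair machinery of~\cite{FranksRicheson} and, in the combinatorial form actually exploited here, the framework of~\cite{DFT}. Taking that as a cited fact rather than reproving it, your argument completes the justification that the paper leaves implicit.
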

Both corollaries can be useful in different settings when used to to prove symbolic dynamics through computer-assistance. When existence of periodic points is of primary concern, one can use Corollary~\ref{cor:fixed}. When existence of trajectories which shadow a certain prescribed path given by a symbol sequence, but that do not necessarily correspond to periodic orbits, Corollary~\ref{cor:Waz} is the better tool. This may occur when there are higher-dimensional invariant sets to which the restriction of the dynamics is quasi-periodic. In either case, what makes the implementation possible is the ease of computability of the traces of the induced maps on homology for Corollary~\ref{cor:fixed} and a sufficient condition for non-nilpotency in the case of Corollary~\ref{cor:Waz}. For more details on the implementation, see~\cite{DFT}.

\subsection{Combinatorial Structures}
\label{sec:combinatorial}
All concepts from the discrete Conley index theory from the previous section have analogous definitions in a combinatorial setting for which computer algorithms can be written.
\begin{definition}
A \emph{multivalued map} $F: X\rightrightarrows X$ is a map from $X$ to its power set, that is, $F(x)\subset X$. If for some continuous, single-valued map $f$ we have $f(x)\in F(x)$ and $F$ is acyclic, then $F$ is an \emph{enclosure} of $f$.
\end{definition}
The reason multivalued maps and enclosures are used in our computations is that if they are done properly, they give rigorous results. Moreover, if $F$ is an enclosure of $f$ and $(P_0,P_1)$ is an index pair for $F$ (as we will define below), then it is an index pair for $f$. It also follows that if we can compute the Conley index of $F$ and process the information encoded in it, we may obtain information about the dynamics of $f$.

We begin by setting up a grid $\mathcal{G}$ on $M$, which is a compact subset of the $n$ dimentional manifold $M$ composed of finitely many elements $\mathcal{B}_i$. Each element is a cubical complex, hence a compact set, and it is essentially an element of a finite partition of a compact subset of $M$. In practice, all elements of the grid are rectangles represented as products of intervals (viewed in some nice coordinate chart), that is, for $\mathcal{B}_i\in\mathcal{G}$, $\mathcal{B}_i = \prod_{k=1}^n[x^i_k,y^i_k]$. We refer to each element of $\mathcal{G}$ as a \emph{box} and each box is defined by its center and radius, i.e., $\mathcal{B}_i = (c_i,r_i)$, where $c_i$ and $r_i$ are $n$-vectors with entries corresponding to the center and radius, respectively, in each coordinate direction. In practice we have $r_i$ the same for all boxes in $\mathcal{G}$ but this is in no way necessary and there may be systems for which variable radii for boxes provide a significant advantage. For a collection of boxes $\mathcal{K}\subset\mathcal{G}$, we denote by $|\mathcal{K}|$ its \emph{topological realization}, that is, its corresponding subset of $M$. From now on we will use caligraphy capital letters to denote collections of boxes in $\mathcal{G}$ and by regular capital letters we will denote their topological realization, e.g., $|\mathcal{B}_i| = B_i$.

The way we create a grid is as follows. We begin with with one big box $\mathcal{B}$ such that $|\mathcal{B}|$ encloses the area we wish to study. Then we subdivide $\mathcal{B}$ $d$ times in each coordinate direction in order to increase the resolution at which the dynamics are studied. The integer $d$ will be refered to as the \emph{depth}. Thus working at depth $d$ gives us a maximum of $2^{dn}$ ($n=\dim M$) boxes with which to work of each coordinate of size $2^{-d}$ relative to the original size of the box $\mathcal{B}$.
\begin{definition}
A \emph{combinatorial enclosure} of $f$ is a multivalued map $\mathcal{F}:\mathcal{G}\rightrightarrows \mathcal{G}$ defined by
$$\mathcal{F}(\mathcal{B}) = \{\mathcal{B}'\in\mathcal{G}:|\mathcal{B}'|\cap F(B)\neq\varnothing\},$$
where $F$ is an enclosure of $f$.
\end{definition}
In practice, combinatorial enclosures are created as follows. One begins with $\mathcal{B}\in\mathcal{G}$ and defines $F(x)$, $x\in B=|\mathcal{B}|$, as the image of $B$ using a rigorous enclosure for the map $f$. Rigorous enclosures are obtained by keeping track of the error terms in the computations of the image of a box and making sure the true image $f(B)$ is contained in $|\mathcal{F}(\mathcal{B})|$. In this paper, all rigorous enclosures will be obtained with the use of interval arithmetic using Intlab~\cite{intlab}. Note that $|\mathcal{F}|$ becomes an enclosure of $f$. $\mathcal{F}:\mathcal{G}\rightrightarrows\mathcal{G}$ can also be represented as a matrix $\mathcal{T}$ with entries in $\{0,1\}$ with $\mathcal{T}_{i,j} = 1$ if and only if $\mathcal{B}_j \in\mathcal{F}(\mathcal{B}_i)$. Moreover $\mathcal{T}$ can be viewed as a directed graph with vertices corresponding to individual boxes in $\mathcal{G}$ and edges going from box $i$ to box $j$ if and only if $\mathcal{T}_{i,j}=1$.
\begin{definition}
A \emph{combinatorial trajectory} of a combinatorial enclosure $\mathcal{F}$ through $\mathcal{B}\in \mathcal{G}$ is a bi-infinite sequence $\gamma_G = (\dots,\mathcal{B}_{-1},\mathcal{B}_0,\mathcal{B}_1,\dots)$ with $\mathcal{B}_0=\mathcal{B}$, $\mathcal{B}_n\in \mathcal{G}$, and $\mathcal{B}_{n+1}\in \mathcal{F}(\mathcal{B}_n)$ for all $n\in \mathbb{Z}$.
\end{definition}
The definitions which follow are by now standard in the computational Conley index literature. We will state definitions and refer the reader to~\cite{DFT} for the algorithms which construct the objects defined.
\begin{definition}
The \emph{combinatorial invariant set} in $\mathcal{N}\subset \mathcal{G}$ for a combinatorial enclosure $\mathcal{F}$ is 
$$\mbox{Inv}(\mathcal{N},\mathcal{F}) = \{ \mathcal{B}\in\mathcal{G}:\mbox{ there exits a trajectory }\gamma_G\subset\mathcal{N}\}.$$
\end{definition}
\begin{definition}
The \emph{combinatorial neighborhood} or \emph{one-box beighborhood} of $\mathcal{B}\subset\mathcal{G}$ is
$$o(\mathcal{B}) = \{\mathcal{B}'\in\mathcal{G}:|\mathcal{B}'|\cap|\mathcal{B}|\neq\varnothing\}.$$
\end{definition}
\begin{definition}
If
$$o(\mbox{Inv}(\mathcal{N},\mathcal{F}))\subset \mathcal{N}$$
then $\mathcal{N}\subset\mathcal{G}$ is a \emph{combinatorial isolating neighborhood} for $\mathcal{F}$.
\end{definition}
A procedure for creating a combinatorial isolating neighborhood is discussed in Section 3 and an given by Algorithm~\ref{alg:grow-iso}. Once we have a combinatorial isolating neighborhood, it is possible to define and create a combinatorial index pair.
\begin{definition}
A pair $\mathcal{P} = (\mathcal{P}_1,\mathcal{P}_0)\subset\mathcal{G}$ is a \emph{combinatorial index pair} for the combinatorial enclosure $\mathcal{F}$ if its topological realization $P_i = |\mathcal{P}_i|$ is an index pair for any map $f$ for which $\mathcal{F}$ is an enclosure.
\end{definition}
One of the main goals of this paper is a procedure to efficiently compute combinatorial index pairs. This is the content of the next section. We now have made all definitions necessary to define the Conley index. Computing the induced map on homology at the combinatorial level is no trivial task. We refer the enthusiastic reader to~\cite{CompHom} for a very thourough exposition on computing induced maps on homology. In practice, we use the computational package \emph{homcubes}, part of the computational package \emph{CHomP}~\cite{chomp}, which computes the necessary maps on homology to define the Conley index.
\section{Automated Symbolic Dynamics}
The results of~\cite{DFT} show that it is possible to create an automated procedure to rigorously prove a semi-conjugacy from a map $f:M\rightarrow M$ to a subshift $\Sigma_A$. The procedure detailed in~\cite{DFT} can be summarized as follows:
\begin{enumerate}[(i)]
\item Start with a rectangle in $\mathbb{R}^n$ and obtain a grid of
  boxes of constant radius by partitioning it in each coordinate direction $d$ times for some fixed $d$.
\item Compute the combinatorial enclosure $\mathcal{F}$ of $f$ in a
  neighborhood of the area of interest, given by the interval
  arithmetic images of boxes in the grid. For some fixed $k$, create a collection
  $\mathcal{P}_k\subset\mathcal{G}$ of boxes which are periodic of
  period $n\leq k$ under $\mathcal{F}$ by finding non-zero entries of
  the diagonal of $\mathcal{T}^n$.
\item Using shortest path algorithms, and denoting by
  $\mathcal{D}_{ij}$ any shortest path between
  $\mathcal{B}_i\in\mathcal{P}_k$ and $\mathcal{B}_j\in\mathcal{P}_k$
  in $\mathcal{G}$ (if there is no such path, $\mathcal{D}_{ij} =
  \varnothing$), create a collection $\mathcal{A}\subset\mathcal{G}$
  by $\mathcal{A} = \mathcal{P}_k \cup \left(\bigcup_{i\neq j}
    \mathcal{D}_{ij}\right)$. Using the appropriate algorithms, create
  a combinatorial isolating neighborhood for $\mathcal{A}$, a
  combinatorial index pair, and compute its Conley index $[f_{P*}]_s$.
\item Working with a representative of $[f_{P*}]_s$ as matrices (one
  for each level of homology) over the integers, we filter out all the
  nilpotent behavior and find a smaller representative which exhibits
  only recurrent behavior.
\item We study $[f_{P*}]_s$ by its action on the generators of
  $H_1(P_1,P_0;\mathbb{Z})$ grouped by disjoint components of
  $P_1\backslash P_0$, and using Corollary~\ref{cor:Waz} we perform a
  finite number of calculations to prove a semiconjugacy from $f$ to
  $\Sigma_{A}$, where $A$ is a $m\times m$ matrix, and $m$ is a number
  less than or equal to the number of disjoint components of
  $P_1\backslash P_0$.
\end{enumerate}
This method was applied to the H\'enon map and a semi-conjugacy to a
subshift on 247 symbols was proved, which gave a rigorous lower bound
on the topological entropy.

We make a few remarks about the approach. First, although all periodic orbits of the H\'enon map exhibit hyperbolic behavior, this is not true in all systems. In Hamiltonian systems, for example, one expects roughly half of the periodic orbits to be isolated invariant sets. Thus looking at the diagonal of $\mathcal{T}^n$ is not enough to capture orbits which will give us isolated invariant sets and a nontrivial Conley index. Another issue which arises in practice is the computational complexity of the algorithms to prove a semiconjugacy to a subshift. The complexity increases exponentially with the dimension of the system and if we have any hope of getting results in systems of dimension higher than 2, we must perform the computations as efficiently as possible.

With this in mind, we view the entire process of proving a semi-conjugacy as the composition of two major steps:
\begin{enumerate}
\item The gathering of recurrent, isolated invariant sets. This can be done through a combination of non-rigorous numerical methods, graph algorithms, set-oriented methods, linear algebra operations, et cetera.
\item A proof that this behavior in fact exists through a semi-conjugacy to a subshift.
\end{enumerate}

We point out that the second step is solved in~\cite{DFT}. That is,
given as input an index pair $(P_0,P_1)$ and the combinatorial
enclosure $\mathcal{F}$ of $f$ used to create it, \emph{proving
  semi-conjugacy from $f$ to a subshift is completely automated}.  The
first step is also largely dealt with, but the approach is not as
general as it can be, as it was devised to deal with maps like the
H\'{e}non map where isolation is easy. We wish to consider cases where
one cannot simply compute the map on all boxes in the area of
interest, either because the dimension of the attractor is too high or
because one needs a very high box resolution to achieve
isolation. Whereas~\cite{DFT} focused on (2), here we focus on (1) and
on how to produce index pairs more efficiently.  In settings where
isolation is difficult, our methods in this paper to efficiently deal
with (1) and the solution in~\cite{DFT} of (2) constitute a better
method to prove semi-conjugacies to a SFT.

We provide an efficient method of computing an index pair in step (1)
when one knows roughly what to look for. In essence, we believe that a
small amount of \emph{a-priori} knowledge goes a long way. In other
words, we can simplify and make computations much more efficient if we
know something about dynamics beforehand, like previous knowledge of
the location of periodic, heteroclinic, homoclinic orbits, special
symmetries of the system, et cetera. We will illustrate this approach
with examples in Section~\ref{sec:computations}.

More precisely, we take as input a list of periodic points and pairs of
points which might have connecting orbits between them, and produce an
index pair containing all of the points and connections, if possible.
For example, if one had numerical approximations of two fixed points
and conjectured that there was a connecting orbit between them, our
algorithms could potentially prove the existence of the connection.

We break the task of finding an index pair into two steps.  The first
is to approximate numerically the invariant objects whose existence we
want to prove, and the second is to cover such an approximation with
boxes and add boxes until the index pair conditions are satisfied.
Before describing these steps, however, we introduce our underlying
data structures.

\subsection{Data Structures}
\label{sec:datastructures}
The algorithms we present rely on some basic data structures to encode
the topology of the phase space and the behavior of the map.  We
require two routines to keep track of the topology: finding the box
corresponding to a particular point in the phase space (to compute the
multivalued map on boxes), and determining which boxes are adjacent
(to determine isolation).  We then of course need a method of storing
the multivalued map itself.

To accomplish all of this, we consider a subset $S$ of our grid and
enumerate the boxes in $S$, giving each a unique integer.  We then
store the position information in a binary search tree, which we
simply call \emph{the tree}, so that the index $i$ for the box $b_i$
covering a given point $x$ in the phase space (i.e., $x\in |b_i|$) may
be computed efficiently.  We use the \emph{GAIO} implementation for
our basic tree data structure~\cite{GAIO} only, and make use of some
enhancements, discussed below.  To encode the topology, we store the
adjacency information in a (symmetric) binary matrix $Adj$, where
$Adj_{\,ij} = Adj_{\,ji} = 1$ if and only if boxes $b_i$ and $b_j$ are
adjacent.  More precisely, $Adj_{\,ij}=1$ if and only if $|b_i| \cap
|b_j| \neq \varnothing$.  We will call this matrix the \emph{adjacency
  matrix}. This is a crucial component when working on spaces with
nontrivial topology such as the torus, as we will see in section
\ref{sec:computations}. Finally, we store the multivalued map as a
transition matrix $P$ such that $P_{ji} = 1$ if and only if $b_j\in
\mathcal{F}(b_i)$.

Below are the operations of the tree data structure:
\begin{enumerate}[(i) ]
\item {\bf $S$ = boxnums() } -- Return a list of all box numbers in the tree
\item {\bf $S$ = find($C$) } -- Given a set $C\subset M$ of points, return the
  indices $S$ of boxes covering any $x\in C$ 
\item {\bf insert($C$) } -- Insert boxes into the tree covering points
  $C\subset M$ (if they do not already exist) 
\item {\bf delete($S$) } -- Remove boxes with indices in $S$ from the
  tree 
\item {\bf subdivide() } -- For each box $b$ in the tree, divide each of its
  coordinate directions in half creating $2^n$ smaller boxes, thus
  increasing the depth of the tree by 1 
\end{enumerate}

Although the \emph{GAIO} tree data structure is in many ways
well-suited to our task, it has a few simple drawbacks which
significantly restrict our computations.  Specifically, the operations
insert($\cdot$) and delete($\cdot$) scramble the box numbers of the
tree as a side-effect.  Thus, if we want to keep track of our current
box set while inserting or deleting boxes, we have to spend extra time
computing the new box numbers.  Without knowledge of how the
renumbering is done, this would take $O(n \log n)$ time, where $n$ is
the number of boxes, since for each box we have to search the tree to
find its new number.  The best one could hope for would be $O(\log n)$
with careful bookkepping (one still needs logarithmic time to locate
the place in the tree for insertion or deletion).  We managed to
compute the new box numbers in $O(n)$ time by observing that the
numbering is given by a deterministic depth-first search (DFS)
traversal of the tree from the root box, and backsolving the modified
numbers accordingly.

We make use of several other subroutines as well:
\begin{itemize}
\item {\bf $P$ = transition\_matrix(tree,$f$[,$S$])} -- Compute the
  multivalued map $\mathcal{F}$ using $f$ (optionally only for boxes
  in $S$) and return it as a matrix
\item {\bf $Adj$ = adjacency\_matrix(tree)} -- Using the tree.find($\dots$)
  function, compute the adjacency matrix which describes the topology
  of the boxes in the tree
\item {\bf $oS$ = tree\_onebox($S$,$Adj$)} -- The indices of boxes in a
  one-box neighborhood of $S$ in the tree
\item {\bf tree = insert\_onebox(tree,$S$)} -- Using tree.insert($\dots$),
  add boxes to the tree that would neighbor a box in $S$ were they
  already in the tree
\item {\bf tree = insert\_image(tree,$f$,$S$)} -- Compute the images of each
  box in $S$ under $f$ using interval arithmetic
\item {\bf $S$ = grow\_isolating($S$,$P$,$Adj$)} -- Compute an isolating
  neighborhood of $S$ in the tree.  This procedure is called
  grow\_isolating\_neighborhood in~\cite{DFT}; for completeness we
  restate it here as Algorithm~\ref{alg:grow-iso}.
\item {\bf $S$ = invariant\_set($N$,$P$)} -- Compute the maximal invariant
  set containing $N$ according to the multivalued map $P$
\end{itemize}

\begin{algorithm}[!ht]
\caption{$grow\_isolating$: Growing an isolating neighborhood}
\begin{algorithmic}
  \STATE Input: $S$,$P$,$Adj$
  \LOOP
  \STATE $I$ = invariant\_set($S$,$P$)                   
  \STATE $oI$ = tree\_onebox($I$,$Adj$)                  
  \STATE \textbf{if} $oI \subseteq S$ \textbf{return} $I$
  \STATE $S = oI$                                        
  \ENDLOOP
  \STATE Output: $S$
\end{algorithmic}
\label{alg:grow-iso}
\end{algorithm}

Note the difference between insert\_onebox(tree,$S$) and
tree\_onebox($S$,$Adj$); the former adds all boxes to the tree that
touch $S$, but the latter find boxes that touch which are already in
the tree.

\subsection{Numerical Approximations}
\label{sec:approx}

We assume here that one has numerical approximations of points which
are part of hyperbolic, invariant sets such as periodic points and
homoclinic points.  The goal then is to find the desired connecting
orbits between specified points.  More formally, given a finite set $X
= \{(x_i,y_i)\}_i \subset M\times M$ of pairs of points in the phase
space, we want to find a small set of boxes $S$ at depth $d$ such that
$\bigcup X \subset |S|$ and for every pair $p_i$, if $x_i \in |b_x|$
and $y_i \in |b_y|$, then there is a path from $b_x$ to $b_y$
according to the multivalued map $\mathcal F$.  In other words, we
wish to find connections between each $x_i$ and $y_i$ at depth $d$.
Intuitively, we think of the set $X$ as the ``skeleton'' of the
invariant behavior of interest.  For example, if we wanted to connect
a period 2 orbit ${a,b}$ to a fixed point ${c}$ and back, we could
have $X = \{(a,c),(b,c),(c,b),(c,a)\}$.  We assume of course that the
desired connections exist.

Typically the most expensive part of the calculations, especially in
applications involving rigorous enclosures, is the box image
calculations.  Thus we wish to find an algorithm which computes as few
images as possible, but also has a reasonable running time in terms of
the other parameters.

The first algorithm one might try is to go to depth $d$, add all boxes
in the general area of the invariant objects, and then use shortest
path algorithms to find the connections.  Unfortunately this requires
computing $O(2^{dn})$ image calculations, where $n=\dim(M)$.  We can
improve this bound by instead doing a breadth-first search at depth
$d$, that is, starting at each $x_i$ we compute images until we reach
$y_i$ in a breadth-first manner.  This gives roughly $O(|X| b^\ell)$,
where $b$ is the average image size of a box, which will typically
depend exponentially on $n$, and $\ell$ is the average connection
length.  This is no longer exponential in $d$, and $|X|$ is typically
relatively small, but there are still many image calculations as $b$
is often very large.

Fortunately we can achieve only $O(d|X|\ell)$ image calculations using
Algorithm~\ref{alg:connections}, which is essentially an recursive
version of the first algorithm: we compute the connections at a low
depth, then subdivide to get to the next depth and grow one-box
neighborhoods until the connections are found again, and repeat until
we reach the final depth.

\begin{algorithm}[!ht]
\caption{The connection insertion algorithm}
\begin{algorithmic}
  \STATE Input: $f$, $X = \{(x_i,y_i)\}_i\subset M\times M$
  \FOR{depth = $d_\text{start}$ \textbf{to} $d_\text{end}$}
  \STATE tree.insert($\cup_i\{x_i,y_i\}$)
  \LOOP
  \FORALL{$i$}
  \STATE $p_i$ = shortest path from $x_i$ to $y_i$ in $P$
  (or $\varnothing$ if no path)
  \ENDFOR
  \STATE \textbf{if} $\;\forall i \; p_i \neq \varnothing$, \textbf{break loop}
  \STATE tree = insert\_onebox(tree,tree.boxnums())
  \STATE $P$ = transition\_matrix(tree,$f$)
  \ENDLOOP
  \STATE tree.delete(tree.boxnums() $\setminus \; \cup_i p_i$)
  \STATE tree.subdivide()
  \STATE $P$ = transition\_matrix(tree,$f$)
  \ENDFOR
\end{algorithmic}
\label{alg:connections}
\end{algorithm}

By a strict reading of Algorithm~\ref{alg:connections}, we might end
up computing many box images repeatedly from the
transition\_matrix($\cdot$) call in the inner loop.  To avoid this, we
simply cache box image calculations: each time a box image calculation
is called for, we first check to see whether we have computed it
already; if so we look it up, and if not we compute it and store it.
With caching, we achieve the $O(d|X|\ell)$ bound on image calculations.

Note that a box connection found at a depth $d$ may only correspond to
a $\epsilon_0 2^{-d}$-chain rather than an actual orbit, where
$\epsilon_0$ is the length of a box diagonal at depth 0. 
 Typically we rely on shortest path algorithms
to give us the most plausible orbits, but there are cases, such as
spiralling behavior and other `roundabout' connections, where the
shortest paths will not be true orbits. For example,
even the identity map has a path from any box to another according to
the transition matrix. In all of these cases it may
be necessary to use other methods for approximating or connecting
orbits, such as computing the stable and unstable manifolds of the
hyperbolic invariant sets to high accuracy and computing heteroclinic
intersections.

Roughly speaking, a long connection between $p$ and $q$ will have many
of these $\epsilon_0 2^{-d}$-chains from $p$ to $q$ to compete with,
the vast majority of which do not correspond to actual trajectories.
In fact, in many cases the chains will actually be shorter than the
true connection.  For example, in a simple slow rotation about the
origin $(r,\theta)\mapsto(r,\theta+2\pi/k)$, every point is either
period $k$ or $1$ (the origin), but one would find closed chains of length
much lower than $k$ when sufficiently close to the origin.  The result of this
observation is that longer connections will require more iterations of
the insert\_onebox(tree,tree.boxnums()) call, since the probability of
picking a $\epsilon_0 2^{-d}$-chain which corresponds to an actual
orbit becomes quite small as the connection length grows.
Consequently, longer connections will take significantly longer to
compute than shorter ones using Algorithm~\ref{alg:connections}.

One way around this issue is to make use of the duality between
finding connections between periodic orbits of low period and finding
periodic orbits of higher period.  On the one hand, by finding enough
connecting orbits (enough to capture horseshoe dynamics) between
periodic orbits of low period, one finds infinitely many periodic
orbits of higher periods.  On the other hand, it is often the case
that by finding enough periodic orbits of all periods (up to some high
period) at a given depth, most of the connecting orbits which live
close to them should also be captured.  Since dealing with long
connections can be problematic, it is simpler to deal with specific, high
period, periodic orbits if they are easily obtainable.

A second way to compute long connections is to exploit some a priori
knowledge of the map.  We apply both of these techniques in
section~\ref{sec:computations}.

\subsection{Growing an Index Pair}
\label{sec:growing}

Given a small starting set $S$ of boxes corresponding to numerical
guesses of hyperbolic invariant sets, we now wish to create an index
pair from $S$.  To do this, we compute an isolating neighborhood $N$
of $S$.  A first approach might be to use the grow\_isolating
algorithm, Algorithm~\ref{alg:grow-iso}, on the whole of $M$, but as
we are concerned with the setting where isolation is difficult, this
will simply fail to find the desired structures.  Even if we manage to
cover only what we are interested in, this top-down approach requires
computing many images; as with Algorithm~\ref{alg:connections}, we
wish to minimize the number of image calculations while keeping a
reasonable running time.  Specifically, we would like an algorithm
that computes the images of $N$ and no other images, which would be
optimal in our setting.  We will see that Algorithm~\ref{alg:grow-ins}
does precisely that.

\begin{algorithm}
  \caption{Growing an isolating neighborhood by inserting boxes}
\begin{algorithmic}[!ht]
  \STATE Input: tree,$f$
  \STATE $B$ = tree.boxnums()
  \STATE $S = B$
  \REPEAT
  \STATE $P$ = transition\_matrix(tree,$f$,$S$)
  \STATE $Adj$ = adjacency\_matrix(tree)
  \STATE $I$ = invariant\_set($S$,$P$)
  \STATE $oI$ = tree\_onebox($I$,$Adj$)
  \STATE achieved\_isolation = $(oI \subseteq S)$
  \STATE $S = oI$                                        
  \STATE tree = insert\_onebox(tree,$I$)
  \STATE tree = insert\_image(tree,$f$,$oI$)
  \STATE $B$ = tree.boxnums() $\setminus\; B$
  \UNTIL {(achieved\_isolation \textbf{and} $B == \varnothing$)}
  \STATE Output: $S$
\end{algorithmic}
\label{alg:grow-ins}
\end{algorithm}

Note that the loop invariant (and thus correctness) of this algorithm
is highly sensitive to the order of the steps.  As before, we cache
box image computations.

We can think of Algorithm~\ref{alg:grow-ins} as a modification of
grow\_isolating (Algorithm~\ref{alg:grow-iso}), where we make use of
caching and \emph{lazy evaluation}, meaning we only add boxes to the
tree and compute box images when we absolutely have to.  This way we
can start with only our initial skeleton of points in the tree and
grow both the tree and multivalued map just enough to accommodate the
isolating neighborhood and verify its isolation.

In fact we can see that we compute exactly the images we need, in a
precise sense.  Consider the returned set $S$ which,
\emph{a-posteriori}, must be equal to tree\_onebox($I$,$Adj$) for some
invariant set $I$.  Since we terminated, $S$ must be the true one-box
neighborhood of $I$; otherwise, we would have added new boxes in the
insert\_onebox call and failed to terminate.  Similarly, the image of
$S=o(I)$ must already be in the tree, since otherwise the
insert\_image call would have resulted in new boxes.  Thus, $S$ is an
isolating neighborhood of $I$, and since we grew $S$ only by adding
boxes to it, we have computed images only for boxes in $S$, which is
precisely what we need to verify its isolation.

Algorithm~\ref{alg:grow-ins} is the cornerstone of our approach in
this paper.  Without it, no matter how clever our numerical
approximations are, producing index pairs would essentially be just as
expensive as computing the map on all boxes.  This is especially
important when one is working with a low-dimensional invariant set in
a high-dimensional embedded space, or when the hyperbolic, invariant
sets whose existence we wish to prove are tightly squeezed between
non-hyperoblic sets or even singularities of the map.

To see this more precisely, consider the memory required to store the
box images using our bottom-up insertion approach as compared to the
top-down approach of \cite{DFT} ; let $M_{\text{ins}}$ and
$M_{\text{DFT}}$ be the memory in each case.  As above let $n$ be the
dimension of the manifold, and $K_d$ be the number of boxes needed to
cover the invariant set at depth $d$ (which is independent of the
method used).  In settings where isolation is difficult, the method
of~\cite{DFT} would typically require $M_{\text{DFT}} = O(2^{dn})$.
Using Algorithms~\ref{alg:connections} and~\ref{alg:grow-ins},
however, we can achieve $M_{\text{ins}} = K_d$.  Thus, in situations
where $K_d \ll 2^{dn}$, we get a tremendous savings using our
bottom-up method, and if the depth required for isolation is high,
this savings could be the difference between infeasible and feasible.
We will see a concrete example of this in the next section when we
study the standard map, with further discussion in
section~\ref{sec:implementation}.

\section{Computations}
\label{sec:computations}
We apply our methods to the standard map
\begin{equation*}
  f_\varepsilon:(x,y)\mapsto
  \left(x+y+\frac{\varepsilon}{2\pi}\sin(2\pi x)\mbox{ mod 1},
    \; y+\frac{\varepsilon}{2\pi}\sin(2\pi x)\mbox{ mod 1}\right),
  \label{eq:stdmap}
\end{equation*}
where $\varepsilon>0$ is a perturbation parameter. This map is perhaps
the best-known exact, area-preserving symplectic twist map. For
$\varepsilon = 0$ every circle $\{ y = \mbox{constant} \}$ is
invariant and the dynamics of the map consist rotations of this circle
with frequency $y$.

\begin{figure}[t]
\begin{center}
  \includegraphics[width = 4 in]{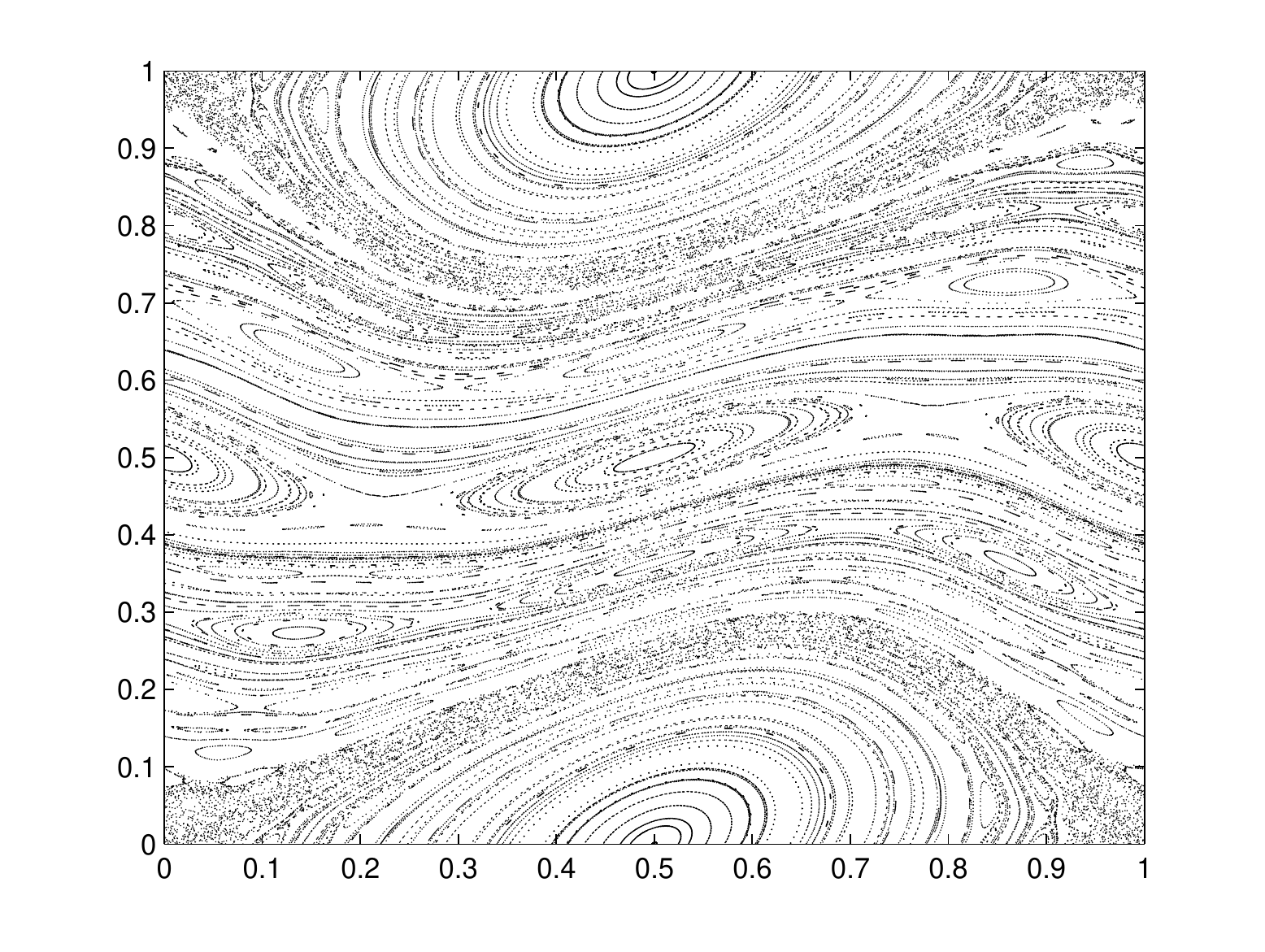}
\vspace{-7mm}
\caption{Plot of trajectories of the standard map for $\varepsilon = \frac{3}{4}$.}
\label{fig:orbits}
\vspace{-3mm}
\end{center}
\end{figure}

The map $f_\varepsilon$ can be recovered by its generating function
$h:\mathbb{T}^2\rightarrow \mathbb{R}$ in the sense that, if
$f_\varepsilon (x,y) = (X,Y)$, then $YdX - ydx = dh(x,X)$. Moreover,
we have its \emph{action} $\mathcal{A}$ which, for a sequence of
points $\{x_N,\dots,x_M\}$, is
\begin{equation}
\label{eqn:action}
\mathcal{A}(x_N,\dots, x_M) = \sum_{k=N}^{M-1} h(x_k,x_{k+1}),
\end{equation}
such that trajectories of $f_\varepsilon$ ``minimize'' the action for
fixed endpoints $\{x_N,x_M\}$, in an analogous way to classical
Lagrangian mechanics (see~\cite[\S 2.5]{gole}).

\begin{figure}[t]
\begin{center}
  \hspace{-50pt} 
  \includegraphics[trim = 15 15 415 595, width = 3 in]{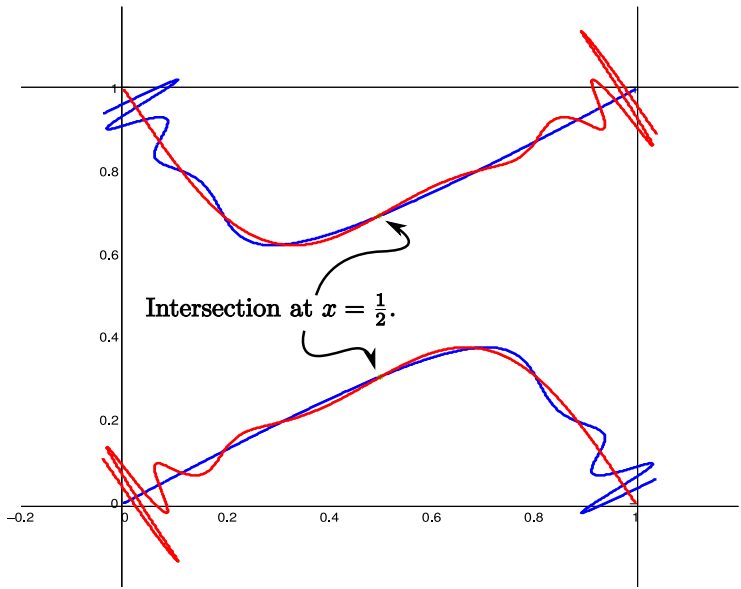}
  \caption{Intersection of the stable and unstable manifolds at $x =
    \frac{1}{2}$.}
\label{fig:manifolds}
\end{center}
\end{figure}

It follows from a simple symmetry argument that for all $\varepsilon
\neq 0$, the stable and unstable manifolds of the hyperbolic fixed
point intersect at $x=\frac{1}{2}$ (see
Figure~\ref{fig:manifolds}). Denote by $(\frac{1}{2},y_\varepsilon)$
such a homoclinic point. It is rather easy to approximate this point
numerically as one needs only to follow an approximation of either the
stable or unstable manifold until it crosses $x =
\frac{1}{2}$. Moreover, we can see from Figure~\ref{fig:manifolds}
that between the point $(\frac{1}{2},y_\varepsilon)$ and its image
$f_\varepsilon(\frac{1}{2},y_\varepsilon) = (\frac{1}{2} +
y_\varepsilon, y_\varepsilon)$ there is homoclinic point which we
denote as $(u,v)$. It is also relatively easy approximating
numerically this point based on having a good approximation of the
stable and unstable manifolds and the point
$(\frac{1}{2},y_\varepsilon)$. Using these points, one can proceed in
two different ways in order to get index pairs for and isolated
invariant set which belong to the homoclinic tangle of the fixed
point.

The first way is to find the intersection of the stable and unstable
manifolds at $x = \frac{1}{2}$ at high accuracy, and iterate this
point enough times forward and backwards to get close enough to the
fixed point. Considering all of these iterates and the fixed point as
our invariant set, we may grow an isolating neighborhood and index
pairs associated to them using Algorithm~\ref{alg:grow-ins}. This is
perhaps the quickest way to get an index pair, as we already know
where the isolated invariant set is. The downside is that in order for
this approach to be successful, we need to compute the first
homoclinic intersection with very high accuracy, as the homoclinic
connection may be lengthy, and after enough iterations the error may
become large enough to not give us a good approximation of the
invariant set.

The second approach is to compute the connections using shortest path
algorithms as in Algorithn~\ref{alg:connections}. This approach
requires less precision in the computation of the homoclinic
intersection, but is slower than the first approach. However, it is
faster than a blind shortest-path search because it takes in
consideration the dynamics of $f_\varepsilon$: recalling the action
(\ref{eqn:action}) of $f_\varepsilon$, we have $\mathcal{A}(x_1,x_2) =
h(x_1,x_2)$ and so we can compute the \emph{averaged action}
$\tilde{\mathcal{A}}:\mathcal{G}\times\mathcal{G}\rightarrow\mathbb{R}$
from box $\mathcal{B}_i$ to $\mathcal{B}_j$ as the average of
$\mathcal{A}(x,y)$ with $x\in|\mathcal{B}_i|$ and
$y\in|\mathcal{B}_j|$. Thus we can re-weigh the graph representing the
map on boxes, from having every edge of weight 1, to having the edge
going from $\mathcal{B}_i$ to $\mathcal{B}_j$ weight $K +
\tilde{\mathcal{A}}(\mathcal{B}_i,\mathcal{B}_j)$, where $K$ is any
positive number satisfying
$$K > K^* \equiv \max_{(x_1,x_2)\in\mathbb{T}^2}\left|\mathcal{A}(x_1,x_2)\right|,$$
uniform for all $\mathcal{B}_i$. Different choices of $K$ do not
necessarily give the same shortest (or cheapest) path: higher $K$
gives more weight to the number of edges in the path (which one might
use when working at a low depth), while lower $K$ gives more weight to
the action (which one might use at a high depth). Thus, searching for
shortest paths in the graph with the new weights, we have a better
chance to compute the right connection at the first try, as the
connections which are computed contain, on average, the least action
from the beginning box to the end box.

This second method turns out to be a better fit in our setting, as
numerically iterating the point $(\frac{1}{2},y_\varepsilon)$
introduces error which is multiplied upon each iteration.  Thus, the
error grows exponentially fast until the iterates reach a small
neighborhood of the fixed point, and this error is further exacerbated
by the fact that we will mostly treat $\varepsilon$ as an interval.
The second method still requires some precision, but it is modest
enough that we can efficiently obtain it using the parameterization
method~\cite{parameterization3}.  Algorithm~\ref{alg:main} summarizes
our construction of index pairs which contain the homoclinic tangle of
the hyperbolic fixed point.

As $\varepsilon$ decreases, both the area of the Birkhoff zone of
instability and the angle of intersection of stable and unstable
manifolds of the hyperbolic fixed point $(\frac{1}{2}, y_\varepsilon)$
decrease \emph{exponentially fast} with
$\varepsilon$~\cite{gelfreich}. Thus since the intersection of the
invariant manifolds is barely transversal, the index pairs associated
to the homoclinic orbits have to stretch out considerably across the
unstable manifolds in order to achieve isolation (see
Figure~\ref{fig:zoom}).  This in turn implies that
Algorithm~\ref{alg:grow-ins} takes more iterations to cover the
invariant set.  Moreover, the size of the boxes is necessarily
exponentially small with $\varepsilon$, and so the number of boxes
needed to create the index pairs increases rapidly. The bottom line is
that the complete automation of the procedure prevents us of having to
create the index pairs by hand, which for low $\varepsilon$ must be an
extremely difficult task.

KAM theory asserts that for $|\varepsilon|$ small enough (roughly
$|\varepsilon|<.971$~\cite{Jungreis}), there is a positive measure set
of homotopically non-trivial invariant circles on which the dynamics
of $f_\varepsilon$ is conjugate to irrational rotations. In this case
the invariant circles folliate the cylinder and serve as obstructions
to orbits from wandering all over the cylinder, i.e., each orbit is
confined to an area bounded by KAM circles. In this case, the
topological entropy of $f_\varepsilon$ is concentrated in the Birkhoff
zone of instability associated to the homoclinic tangle of the
hyperbolic fixed point. Once $\varepsilon>\varepsilon^*\approx .971$
there remain no homotopically non-trivial KAM circles to bound the $y$
coordinate of the orbit of a point and one has then hope to find
connecting orbits between different hyperbolic periodic orbits.

We apply our methods to obtain three types of results:
\begin{itemize}
\item For \emph{all} $f_\varepsilon$ with $\varepsilon\in[.7,2]$, we
  give a positive lower bound for its topological entropy. This is
  done by treating $\varepsilon$ as an interval. An advantage of
  having the procedure automated is that one can easily study a
  parameter-depending system at different values of the
  parameter. Treating the parameter as an interval allows us to detect
  behavior which is common to all values of the parameter in the
  interval. This is done in section~\ref{sec:intervals}.
\item Not treating $\varepsilon$ as an interval allows our method to
  go further and obtains positive bounds for the much lower value of
  $\varepsilon = \frac{1}{2}$. This is done in
  section~\ref{sec:lowest}.
\item Our examples in section~\ref{sec:periods} combine the new
  methods of this paper with the spirit of~\cite{DFT} of connecting
  periodic orbits to find better entropy bounds, which will illustrate
  for the case $\varepsilon = 2$.
\end{itemize}

We remark that in~\cite[\S 4.1]{jay1} an alternate approach for the
creation of index pairs for the standard map is given. It is done
through set-oriented methods which are based on following the
discretized dynamics along the discretized stable and unstable
manifolds. We do not know how this approach would perform when
treating $\varepsilon$ as an interval, although we suspect it would
perform equally well. Our bottom-up approach for constructing index
pairs requires the computation of fewer box images, but in general we
may make use of slightly more \emph{a-priori} information such as the
knowledge of where hyperbolic invariant sets are.  The algorithms
in~\cite{jay1} (and indeed those of \cite[\S 3.1]{DFT}) require less
\emph{a-priori} knowledge of hyperbolic, invariant sets, but require a
greater number of box-image computations (see~\cite{jay1} for more
details).

\rafnote{Edited above and fixed below.}

We should point out that the bounds we provide in the following
sections are close to some of the non-rigorous bounds given
in~\cite{tanikawa}. To our knowledge there are no other bounds in the
literature for the topological entropy of the standard map for small
values of $\varepsilon$. It is expected that the entropy is
exponentially small as $\varepsilon\rightarrow 0$~\cite{gelfreich},
while it is known that the entropy grows at least logarithmically in
$\varepsilon$ as $\epsilon\rightarrow\infty$~\cite{knill}. But for
small values of $\varepsilon$, we have not been able to find
computational bounds besides the ones already cited.

\subsection{Parameter exploration}
\label{sec:intervals}
As remarked earlier, when $\varepsilon<\varepsilon^*$ there exist
invariant KAM circles which prevent the connections between many
periodic orbits. This forces us to concentrate on the homoclinic
tangle of the hyperbolic fixed point.  The results from this section
are obtained using Algorithm~\ref{alg:main} and summarized in
Theorem~\ref{thm:main}.

\begin{algorithm}[ht]
\caption{Creating index pairs for $f_\varepsilon$ using the homoclinic orbits of the fixed point}
\begin{algorithmic}
  \STATE Input: $\bar{\varepsilon} = [\varepsilon^-,\varepsilon^+]$, $f_{\bar{\varepsilon}}$
  \STATE Let $y_{\bar{\varepsilon}} = y_{\frac{\varepsilon^-+\varepsilon^+}{2}}$ and compute $\left(\frac{1}{2},y_{\bar{\varepsilon}}\right)$ using~\cite{parameterization3}.
  \STATE $H = \left\{\left(\frac{1}{2},y_{\bar{\varepsilon}}\right),
    \left(\frac{1}{2},1 - y_{\bar{\varepsilon}}\right),
    (u,v),(1-u,1-v),(0,0)\right\}$
  \STATE $X = \bigcup_{p\neq q \in H}(p,q)$ 
  \STATE Find connections using Algorithm~\ref{alg:connections} and the weighted graph using the averaged action $\tilde{\mathcal{A}}$.
  \STATE Grow the index pairs using Algorithm~\ref{alg:grow-ins}.
\end{algorithmic}
\label{alg:main}
\end{algorithm}

By performing the computations using $\varepsilon$ as an interval, we
are proving behavior which is common for all $f_\varepsilon$ within
such interval. In such case then our guess for the homoclinic
intersection $(\frac{1}{2},y_\varepsilon)$ is done only for one point
in the interval (the midpoint).  In general it is easier to isolate an
invariant set for smaller parameter intervals, since the wider the
interval, the more general the isolation must be.  In our setting, it
is much easier to isolate homoclinic connections for $\varepsilon >
\varepsilon^*$ than it is for $\varepsilon < \varepsilon^*$.  To
reflect this, using a crude approximation of $\varepsilon^* \approx
1.0$ for ease of bookkeeping, we use intervals of size 0.005 for
$\varepsilon \geq 1.0$, but we shrink our interval size to 0.001 for
$\varepsilon < 1.0$.

As $\varepsilon$ decreases, the size of the boxes we use decreases,
and the length of the homoclinic excursion increases, leading to
another increase in the number of boxes needed and a longer running
time of Algorithm~\ref{alg:grow-ins}.  At some point, it becomes
computationaly unrealistic to continue; we stopped somewhat before
this point, when the interval computations took roughly 40 hours for
$\bar{\varepsilon} = [0.700,0.701]$.  See
section~\ref{sec:implementation} for further discussion of the
implementation and efficiency.

\begin{theorem}
\label{thm:main}
The topological entropy of the standard map $f_\varepsilon$ for
$\varepsilon \in [.7, 2]$ is bounded from below by the step function
given in Figure~\ref{fig:step-function}.  In particular, we have
$h(f_\varepsilon)>0.2$ for all $\varepsilon \in [.7, 2]$.  The precise
individual values for each subinterval are given in
Appendix~\ref{app:1}.
\begin{figure}[ht]
\begin{center}
  \includegraphics[width = 5 in]{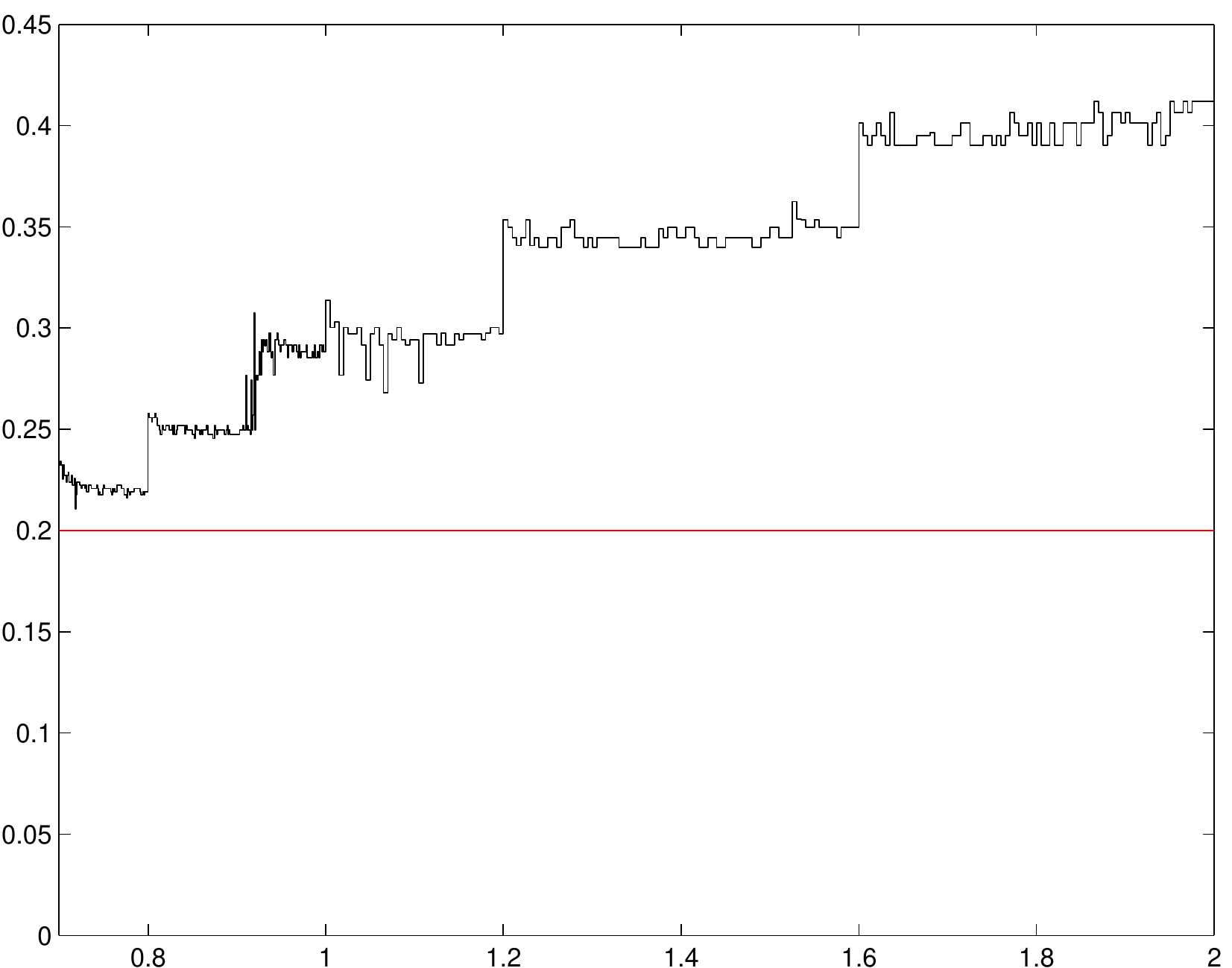}
  \caption{Lower bounds of $h(f_{\varepsilon})$ as a function of
    ${\varepsilon}$ in the interval $[.7, 2]$.  Note that all bounds
    in this interval exceed $0.2$.}
\label{fig:step-function}
\vspace{-3mm}
\end{center}
\end{figure}
\end{theorem}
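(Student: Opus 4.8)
The plan is to reduce the theorem to a finite collection of rigorous, computer-verifiable claims, one per subinterval of $[.7,2]$, and then glue them together using the Conley-index machinery of Section~2. First I would partition $[.7,2]$ into the subintervals described above --- width $0.005$ for $\varepsilon\ge 1.0$ and width $0.001$ for $\varepsilon<1.0$, the finer mesh being forced by the fact that isolating the homoclinic tangle gets harder as $\varepsilon$ decreases. For each subinterval $\bar\varepsilon=[\varepsilon^-,\varepsilon^+]$ I would treat $\varepsilon$ as an interval and run Algorithm~\ref{alg:main}: compute the homoclinic point $(\tfrac12,y_{\bar\varepsilon})$ at the midpoint via the parameterization method~\cite{parameterization3}, form the skeleton $H$ (the homoclinic points, their symmetric partners $(1-u,1-v)$ and $(\tfrac12,1-y_{\bar\varepsilon})$, and the fixed point $(0,0)$) together with the pair-set $X=\bigcup_{p\ne q\in H}(p,q)$, locate the connecting $\epsilon_0 2^{-d}$-chains with Algorithm~\ref{alg:connections} on the action-reweighted transition graph, and finally grow a combinatorial isolating neighborhood and combinatorial index pair $\mathcal P=(\mathcal P_1,\mathcal P_0)$ with Algorithm~\ref{alg:grow-ins}. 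Since the combinatorial enclosure $\mathcal F$ is built with rigorous interval arithmetic (Intlab~\cite{intlab}), $P_i=|\mathcal P_i|$ is a genuine index pair for the true map $f_\varepsilon$ simultaneously for every $\varepsilon\in\bar\varepsilon$.

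Second, for each such index pair I would compute the induced map $f_{P*}$ on $H_*(P_1,P_0;\mathbb Z)$ using \emph{homcubes} from \emph{CHomP}~\cite{chomp}, pass to the shift-equivalence class, and strip off the nilpotent part to obtain a small integer matrix representative --- steps (iv)--(v) of the automated pipeline of~\cite{DFT}. Grouping the generators of $H_1(P_1,P_0;\mathbb Z)$ by the disjoint components of $P_1\setminus P_0$ and invoking Corollary~\ref{cor:Waz}, a finite number of linear-algebra checks then produces a transition matrix $A=A_{\bar\varepsilon}$ together with a semi-conjugacy from $f_\varepsilon$ to $\sigma:\Sigma_{A}\to\Sigma_{A}$ valid for all $\varepsilon\in\bar\varepsilon$. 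Combining the entropy formula $h(\sigma)=\log\mathrm{sp}(A)$ for subshifts of finite type with the monotonicity of topological entropy under semi-conjugacy yields $h(f_\varepsilon)\ge\log\mathrm{sp}(A_{\bar\varepsilon})$; rigorously bounding $\mathrm{sp}(A_{\bar\varepsilon})$ from below (e.g.\ by certifying a positive Perron eigenvector estimate in interval arithmetic) records the value plotted in Figure~\ref{fig:step-function} and tabulated in Appendix~\ref{app:1}. Taking the union over all subintervals, and observing that every recorded value exceeds $0.2$, establishes the theorem.

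The real work, and the main obstacle, is making Algorithm~\ref{alg:main} actually terminate for the smaller parameter values. As $\varepsilon\downarrow.7$ the Birkhoff zone of instability and the stable/unstable intersection angle shrink exponentially fast~\cite{gelfreich}, so the index pairs must stretch far along the unstable manifold before they isolate, the depth $d$ needed grows, the boxes become exponentially small, and Algorithm~\ref{alg:grow-ins} requires many more iterations to cover the invariant set; the bottom-up memory bound $M_{\mathrm{ins}}=K_d\ll 2^{dn}$ of Section~\ref{sec:growing} is precisely what keeps the computation within reach. A secondary difficulty is the interval treatment itself: one index pair must isolate the tangle for an entire range of $\varepsilon$, so the numerical guess for the homoclinic point is exact only at the midpoint and the enclosures must be wide enough to absorb the parameter variation --- hence the shrinking of the interval width below $\varepsilon^*\approx 1.0$. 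Once these computations complete (we stop near $\varepsilon=.7$, where a single interval run already costs roughly $40$ hours), the logical scaffolding above is routine; the substantive content of the theorem is exactly the assertion that the computations do terminate, and that $\log\mathrm{sp}(A_{\bar\varepsilon})>0.2$ throughout $[.7,2]$.
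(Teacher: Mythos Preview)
Your proposal is correct and follows essentially the same route as the paper's own proof: for each subinterval $\bar\varepsilon$ run Algorithm~\ref{alg:main} to obtain a rigorous combinatorial index pair, then feed it through the automated pipeline of~\cite{DFT} (Algorithms 5--8 and Theorem~3.6 there, amounting to finitely many checks via Corollary~\ref{cor:Waz}) to obtain a semi-conjugacy to a subshift of finite type and hence a lower bound $h(f_\varepsilon)\ge\log\mathrm{sp}(A_{\bar\varepsilon})$. Your write-up is in fact more explicit than the paper's terse proof, but the logical structure and the tools invoked are the same.
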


\begin{proof}
  For each of the $\varepsilon$-intervals $\bar{\varepsilon} =
  [\varepsilon^-,\varepsilon^+]$ on the table found in
  Appendix~\ref{app:1}, we get an index pair for an isolated invariant
  set of $f_{\bar{\varepsilon}}$ using Algorithm~\ref{alg:main}. Using
  then Algorithm 5, 6, 7, and 8 and Theorem 3.6 from~\cite{DFT}, which
  is essentially amounts to a finite number of checks using
  Corollary~\ref{cor:Waz}, we prove a semi-conjugacy to a subshift of
  finite type, from which we get a bound on the entropy by bounding
  the spectral radius of the associated matrix.
\end{proof}


There are three aparent scales on which our lower bounds for
$h(f_\varepsilon)$ change with respect to $\varepsilon$: global,
semi-local, and local. Clearly there is an evident \emph{global}
increase of the entropy bounds as $\varepsilon$ increases, as is to
expected. On a semi-local level, there are a few intervals (roughly
$[1.6,2],[1.2,1.6]$, $[.92,1.2]$, $[.8,.92]$, $[.75,.8]$) on which the
bounds for $h(f_\varepsilon)$ seem to hover around a fixed value per
interval. This is due to using the same depth on such intervals. As
$\varepsilon$ decreases, we need to increase the depth. Locally, the
apparent irregularity of the function of lower bounds is due to the
nature of our automated approach: the accuracy of the guess
$\left(\frac{1}{2},y_{\bar{\varepsilon}}\right)$ varies per interval,
as does the computation of the averaged action, et cetera.

\subsection{Positive bound of $h(f_\varepsilon)$ for lowest $\varepsilon$.}
\label{sec:lowest}

In this section we show an example of an index pair for the standard
map for $\varepsilon = \frac{1}{2}$. This value was picked because it
is small enough that we can illustrate the strengths of our algorithms
in tight places while keeping the computation times reasonable.  Using
Algorithm~\ref{alg:main} with $\varepsilon = \frac{1}{2}$ we get an
index pair shown in Figure~\ref{fig:zoom} (although it is barely
visible).

\begin{theorem}
\label{thm:half}
The topological entropy for the standard map $f_\varepsilon$ when
$\varepsilon = \frac{1}{2}$ is bounded below by $0.1732515918346$.
\end{theorem}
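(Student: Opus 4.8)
The plan is to follow the same scheme used for Theorem~\ref{thm:main}, but with $\varepsilon$ fixed at $\tfrac12$ rather than ranging over an interval, which both sharpens the numerics and lets us push to this smaller parameter value. First I would run Algorithm~\ref{alg:main} with the degenerate interval $\bar\varepsilon = [\tfrac12,\tfrac12]$: compute the homoclinic point $(\tfrac12, y_{1/2})$ on the line $x=\tfrac12$ to high accuracy via the parameterization method~\cite{parameterization3}, form the skeleton $H = \{(\tfrac12,y_{1/2}),(\tfrac12,1-y_{1/2}),(u,v),(1-u,1-v),(0,0)\}$ together with the list $X$ of all ordered pairs from $H$, locate the connecting orbits between these points using Algorithm~\ref{alg:connections} with the averaged-action weighting $\tilde{\mathcal A}$, and finally grow an index pair $\mathcal P = (\mathcal P_1,\mathcal P_0)$ around the resulting box skeleton with Algorithm~\ref{alg:grow-ins}. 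Since $\varepsilon = \tfrac12 < \varepsilon^*$, KAM circles confine the dynamics and all of the entropy we can hope to capture lives in the Birkhoff instability zone of the homoclinic tangle of the hyperbolic fixed point, so this is the right (and essentially the only) object to target here.

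Next I would feed $(\mathcal P_1,\mathcal P_0)$ and the combinatorial enclosure $\mathcal F$ that produced it into the fully automated pipeline of~\cite{DFT} --- Algorithms 5--8 there, which by Theorem 3.6 of~\cite{DFT} amount to a finite sequence of checks using Corollary~\ref{cor:Waz} on the action of $f_{\mathcal P*}$ on $H_1(P_1,P_0;\mathbb Z)$ organized by the connected components of $P_1\setminus P_0$ --- to extract a $0/1$ transition matrix $A$ together with a proof of a semi-conjugacy from $f_{1/2}$ to the subshift $\Sigma_A$. By the Corollary to the semi-conjugacy theorem in Section~2, $h(f_{1/2}) \ge \log\mbox{sp}(A)$, and bounding $\mbox{sp}(A)$ rigorously from below (using that $A$ is a nonnegative integer matrix) yields the stated value $0.1732515918346$.

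The hard part is not the symbolic-dynamics bookkeeping, which is already automated, but getting Algorithm~\ref{alg:grow-ins} to terminate with a genuine combinatorial isolating neighborhood at this value of $\varepsilon$. By Gelfreich's estimate~\cite{gelfreich}, at $\varepsilon = \tfrac12$ the splitting angle of the stable and unstable manifolds of $(\tfrac12,y_\varepsilon)$ is exponentially small, so the index pair must stretch a long way along the unstable manifold before the one-box neighborhood of the combinatorial invariant set stops growing; this forces a high depth $d$ (exponentially small boxes) and a correspondingly long homoclinic excursion $\ell$, which is exactly where the $O(d|X|\ell)$ image count and the $K_d$ memory cost of the bottom-up method become essential. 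A secondary obstacle is obtaining the homoclinic intersection accurately enough that the action reweighting in Algorithm~\ref{alg:connections} selects the true connecting orbit rather than a spurious short $\epsilon_0 2^{-d}$-chain; here fixing $\varepsilon$ rather than carrying it as an interval is precisely what makes the required precision attainable. Once isolation is achieved the remaining steps are a finite rigorous computation and the bound follows.
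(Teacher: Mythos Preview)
Your proposal is correct and follows essentially the same approach as the paper: run Algorithm~\ref{alg:main} at the single value $\varepsilon=\tfrac12$ to obtain an index pair, then apply Algorithms~5--8 and Theorem~3.6 of~\cite{DFT} (a finite list of checks via Corollary~\ref{cor:Waz}) to produce a semi-conjugacy to a subshift of finite type and bound the entropy by the spectral radius of its transition matrix. Your surrounding discussion of why isolation is hard at this parameter value (exponentially small splitting, high depth, long excursion) accurately reflects the paper's own commentary and is more detailed than the paper's terse ``same as Theorem~\ref{thm:main}'' proof.
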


The proof is the same as in Theorem~\ref{thm:main}. The tree from
which the index pair obtained for Theorem~\ref{thm:half} was obtained
contains 568,754 boxes. Among those, 281,530 are in the index
pair. Roughly a quarter of the boxes in the index pair form the exit
set (64,518). This index pair $(P_1,P_0)$ gives us an induced map
which acts on $H_1(P_1, P_0;\mathbb{Z}) = \mathbb{Z}^{1801}$ but is
reduced to a SFT in 73 symbols.
\begin{figure}[!t]
\begin{center}
\includegraphics[width = 3.5 in]{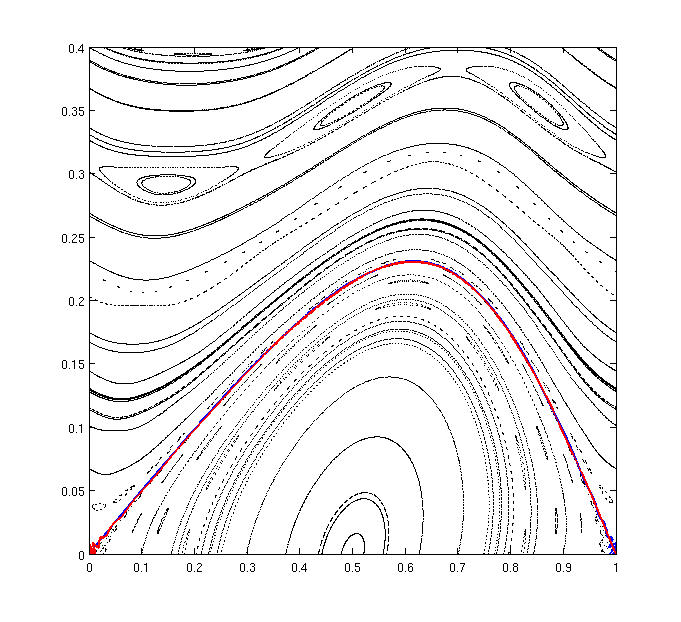}
\vspace{-5mm}
\includegraphics[width = 3.5 in]{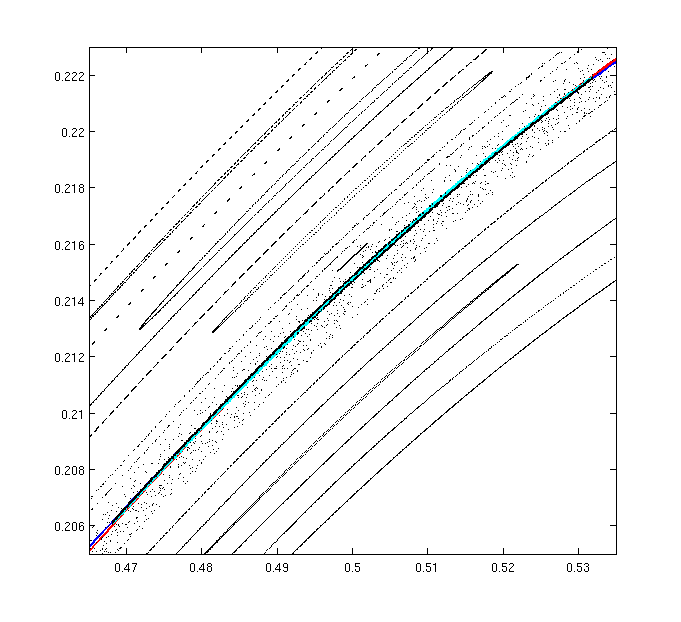}
\vspace{-3 mm}
\caption{\tiny{On top, a plot of the trajectories of the standard map
  for $\varepsilon = \frac{1}{2}$ along with the stable and unstable
  manifolds of the hyperbolic fixed point which seem to overlap. On
  the bottom, a close-up of a component of the index pair yielding
  Theorem~\ref{thm:half} which contains the homoclinic point at $x =
  \frac{1}{2}$ along with trajectories, some of which are part of KAM
  circles squeezing the component. It overlays the stable and unstable
  manifolds, whose angle of intersection is very small, causing the
  index pair to be very sheared.}}
\label{fig:zoom}
\vspace{-2.5mm}
\end{center}
\end{figure}

Figure~\ref{fig:zoom} shows, on top, the plot of some trajectories for
the standard map at $\varepsilon = \frac{1}{2}$, of the stable and
unstable manifolds, and an index pair for the homoclinic orbits. On
the bottom is a close-up to a component of the index pair squeezed by
KAM tori and a barely transversal intersection of the stable and
unstable manifolds. The boxes making up this index pair are of sides
of size $2^{-15}$. The strength of our ``growing-out'' approach is
that the creation of such index pairs in tight places is achievable
and can be automated.

\subsection{Higher periods}
\label{sec:periods}

When $\varepsilon > \varepsilon^*$ all homotopically non-trivial KAM
circles are vanished, thus it is possible to connect different
periodic orbits. The appendix of~\cite{Greene} contains an algorithm
for finding periodic orbits for the standard map. We implement this
method to find the periodic orbits which we use to grow index pairs.

\begin{algorithm}[ht]
\caption{Creating index pairs using the homoclinic orbits of the fixed point and periodic orbits}
\begin{algorithmic}
  \STATE Input: $\bar{\varepsilon} = [\varepsilon^-,\varepsilon^+]$, $f_{\bar{\varepsilon}}$, $P\in\mathbb{N}$
  \STATE Let $y_{\bar{\varepsilon}} = y_{\frac{\varepsilon^-+\varepsilon^+}{2}}$ and compute $\left(\frac{1}{2},y_{\bar{\varepsilon}}\right)$ using~\cite{parameterization3}.
  \STATE $H_1 = \left\{(0,0), \left(\frac{1}{2},y_{\bar{\varepsilon}}\right), \left(\frac{1}{2},1 - y_{\bar{\varepsilon}}\right), (u,v), (1-u,1-v)\right\}$
  \STATE $H_2 =$ hyperbolic periodic orbits of $f_{\bar{\varepsilon}}$ up to period $P$ (computed using the appendix in~\cite{Greene})
  \STATE $X = \bigcup_{p\neq q\in (H_1 \cup H_2)}(p,q)$ 
  \STATE Find connections using Algorithm~\ref{alg:connections} and the weighted graph using averaged action $\tilde{\mathcal{A}}$.
  \STATE Grow the index pairs using Algorithm~\ref{alg:grow-ins}.
\end{algorithmic}
\label{alg:periodic}
\end{algorithm}

Algorithm~\ref{alg:periodic} is essentially the main strategy employed
in~\cite{DFT}. In that paper, good index pairs were found by finding
pairwise connections between periodic orbits. Such an approach can be
slightly generalized by looking for pairwise connections between
hyperbolic, invariant sets, which is what Algorithm~\ref{alg:periodic}
does.

We apply the algorithm to $\varepsilon = 2.0$, with maximum period
$P=2$. Figure~\ref{fig:connections2} shows the index pair for this
computation. We remark that besides finding pairwise connections
between hyperbolic periodic orbits, we find connections between
periodic orbits and the homoclinic orbit $(\frac{1}{2},y_\varepsilon)$
and $(u,v)$ mentioned in section~\ref{sec:intervals}. This allows
us to find richer dynamics and to achieve higher entropy bounds. The
result from this index pair is summarized in the following theorem.

\begin{theorem}
\label{thm:connections2}
The topological entropy for the standard map $f_\varepsilon$ for
$\varepsilon = 2$ is bounded below by $0.44722970117798$.
\end{theorem}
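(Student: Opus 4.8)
The plan is to repeat the computer-assisted scheme behind Theorems~\ref{thm:main} and~\ref{thm:half}, now driving Algorithm~\ref{alg:periodic} with the extra input of the hyperbolic periodic orbits of period at most $P=2$. First I would fix $\varepsilon = 2$ and, using the parameterization method~\cite{parameterization3}, compute to sufficient accuracy the homoclinic point $(\tfrac12, y_\varepsilon)$ together with the secondary homoclinic point $(u,v)$ from Section~\ref{sec:intervals}; I would then run the algorithm in the appendix of~\cite{Greene} to locate the hyperbolic fixed point and the hyperbolic period-$2$ orbit. These points constitute the skeleton $H_1 \cup H_2$, and $X$ is the set of all ordered pairs of distinct skeleton points whose connecting orbits we want to certify.

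Next I would run Algorithm~\ref{alg:connections} on $X$, working in the graph re-weighted by the averaged action $\tilde{\mathcal A}$, to obtain a small box set at the target depth $d$ through which every requested connection exists as a path in the combinatorial enclosure $\mathcal F$. Feeding that box set to Algorithm~\ref{alg:grow-ins} enlarges it to a combinatorial isolating neighborhood and extracts a combinatorial index pair $\mathcal P = (\mathcal P_1,\mathcal P_0)$, computing box images only for boxes in the neighborhood. With $\mathcal P$ fixed I would use \texttt{homcubes} from \emph{CHomP}~\cite{chomp} to compute the induced map $f_{P*}$ on $H_*(P_1,P_0;\mathbb Z)$, then apply Algorithms 5--8 and Theorem 3.6 of~\cite{DFT}: filter out the nilpotent part of $f_{P*}$, group the surviving generators of $H_1(P_1,P_0;\mathbb Z)$ by the connected components of $P_1\setminus P_0$, and invoke Corollary~\ref{cor:Waz} to certify a semi-conjugacy from $f_\varepsilon$ to a subshift of finite type $\Sigma_A$. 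A lower bound on $\mathrm{sp}(A)$ then gives $h(f_\varepsilon) \ge h(\sigma) = \log \mathrm{sp}(A) \ge 0.44722970117798$.

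The main obstacle I expect lies in the connection step rather than in isolation. Since $\varepsilon = 2 > \varepsilon^*$ there are no homotopically nontrivial KAM circles obstructing orbits, so growing an isolating neighborhood via Algorithm~\ref{alg:grow-ins} is comparatively benign; but the connections linking the period-$2$ orbit to the fixed point and to the homoclinic points $(\tfrac12,y_\varepsilon)$ and $(u,v)$ may be long, and as noted after Algorithm~\ref{alg:connections} long connections force many iterations of the \texttt{insert\_onebox} call because most short $\epsilon_0 2^{-d}$-chains fail to shadow true orbits. Re-weighting by $\tilde{\mathcal A}$ mitigates this by making the cheapest paths the most dynamically plausible, but success hinges on choosing the depth $d$ and the action weight $K$ so that the cheapest re-weighted paths are genuine orbits; a secondary practical obstacle is the size of $H_1(P_1,P_0;\mathbb Z)$, and hence of the homology computation, which already had rank in the thousands in the analogous $\varepsilon = \tfrac12$ case.
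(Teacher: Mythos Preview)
Your proposal is correct and matches the paper's approach essentially step for step: run Algorithm~\ref{alg:periodic} at $\varepsilon=2$ with $P=2$, grow the index pair via Algorithms~\ref{alg:connections} and~\ref{alg:grow-ins}, compute the induced map on homology, and then invoke Algorithms~5--8 and Theorem~3.6 of~\cite{DFT} together with Corollary~\ref{cor:Waz} to obtain the semi-conjugacy and entropy bound. The only point worth adjusting is your expectation about practical difficulty: at $\varepsilon=2$ the computation is in fact small (the paper reports depth~9, $8600$ boxes, $H_1(P_1,P_0;\mathbb Z)\cong\mathbb Z^{105}$, and a $59$-symbol SFT), so neither the connection step nor the homology computation is a serious obstacle here.
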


The proof is again similar to Theorem~\ref{thm:main}.  The index pair
$(P_1,P_0)$ has a total of 8600 boxes, and gives us an induced map
which acts on $H_1(P_1, P_0;\mathbb{Z}) = \mathbb{Z}^{105}$ which is
reduced to a SFT in 59 symbols.

\begin{figure}[!ht]
\begin{center}
  \includegraphics[width = 4.2 in]{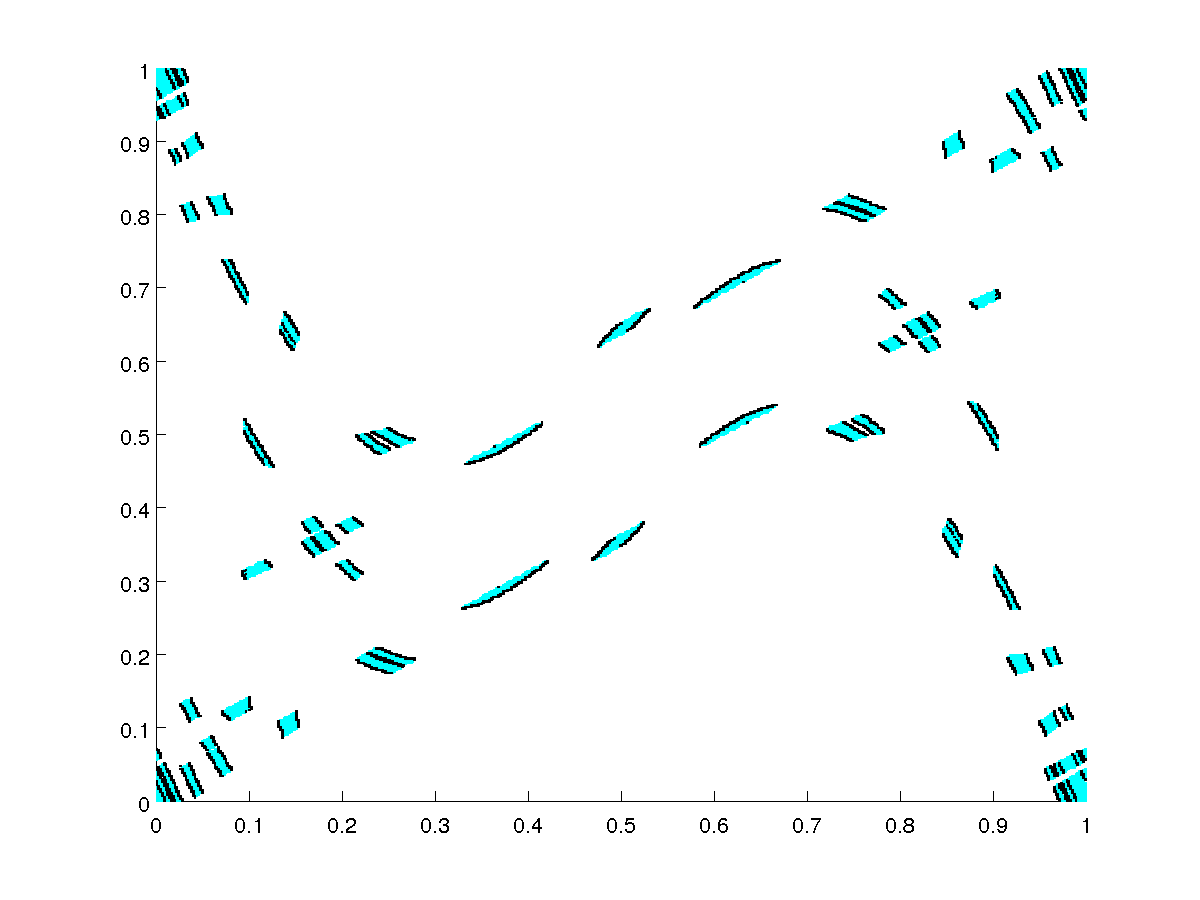}
\vspace{-8mm}
\caption{Index pair obtained through Algorithm~\ref{alg:periodic} for $\varepsilon = 2$ and $P = 2$ at depth $9$.}
\label{fig:connections2}
\vspace{-3mm}
\end{center}
\end{figure}

\begin{figure}[!ht]
\begin{center}
  \includegraphics[width = 4.2 in]{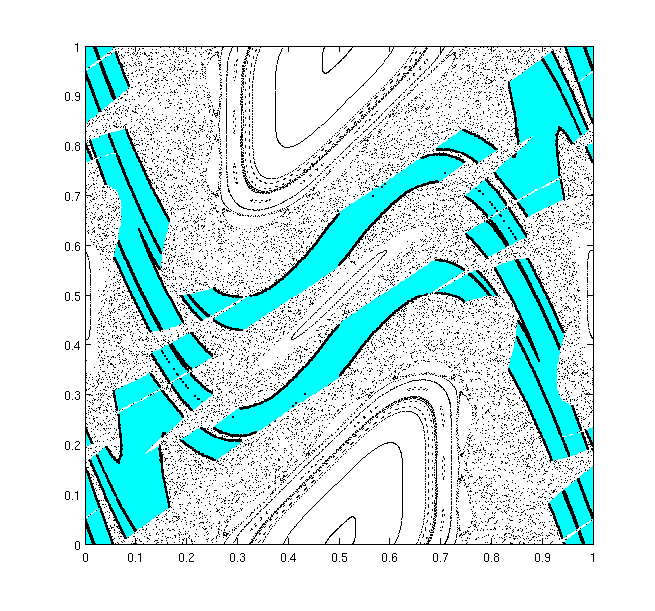}
\vspace{-8mm}
\caption{Index pair obtained by adding periodic orbits of period 3 and 4 to the example in Figure~\ref{fig:connections2}}
\label{fig:awesome}
\vspace{-3mm}
\end{center}
\end{figure}

This result shows that our bounds in Theorem~\ref{thm:main} are
probably suboptimal, since our interval bound for $[1.995,2.000]$ is
lower than in Theorem~\ref{thm:connections2}. It is not surprising
that by adding connections to another hyperbolic periodic orbit we may
find a higher entropy bound.

Algorithm~\ref{alg:periodic} can be quite effective when $P$ is small,
but as $P$ grows, there is large increase (quadratic at the very
least, often exponential) in the number of connections to compute,
especially since the number of period-$P$ orbits grows rapidly with
$P$ for chaotic maps.  Moreover, the number of long connections
increases, which as discussed in section~\ref{sec:approx} makes
Algorithm~\ref{alg:connections} work even harder.  As suggested in
that section, we instead turn to computing periodic orbits of higher
period rather than computing connections explicitly.

For our last example, we apply this approach on top of the previous
example from Theorem \ref{thm:connections2} to produce a very strong
index pair.  That is, we run Algorithm~\ref{alg:periodic} with $P=2$
and then simply add all hyperbolic orbits of period 3 and 4 (there are
four of each) \emph{without} adding any further connections.  The
resulting index pair, shown in Figure~\ref{fig:awesome}, clearly
benefits from these ``natural'' connections.

\begin{theorem}
\label{thm:awesome}
The topological entropy for the standard map $f_\varepsilon$ for
$\varepsilon = 2$ is bounded below by $0.54518888942276$.
\end{theorem}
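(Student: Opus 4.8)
The plan is to follow the template of the proof of Theorem~\ref{thm:main}, but starting from the enriched index pair described just above. First I would run Algorithm~\ref{alg:periodic} with $\varepsilon = 2$ treated as a single parameter value (not an interval, so that the parameterization method of~\cite{parameterization3} gives a sharp homoclinic point $\left(\tfrac12, y_{2}\right)$), with maximum period $P = 2$: form the skeleton $X$ of pairwise pairs among the orbits in $H_1 \cup H_2$, find those connections with Algorithm~\ref{alg:connections} using the action-weighted graph, and begin growing. Then, before the final grow step, I would additionally insert into the tree the boxes covering the four hyperbolic period-$3$ orbits and the four hyperbolic period-$4$ orbits located via the algorithm in the appendix of~\cite{Greene}, but \emph{without} adding any new pairs to $X$ --- the expectation (made plausible in section~\ref{sec:periods}) being that, since these orbits sit inside the well-developed homoclinic tangle at $\varepsilon = 2$, their stable and unstable manifolds already thread transversally through the region being isolated, so Algorithm~\ref{alg:grow-ins} will sweep the relevant connections in ``for free'' while growing one-box neighborhoods to achieve isolation.

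Second, I would run Algorithm~\ref{alg:grow-ins} on the resulting tree to produce an isolating neighborhood and extract a combinatorial index pair $\mathcal P = (\mathcal P_1, \mathcal P_0)$; since $\mathcal F$ is an enclosure of $f_2$, its topological realization $P = (P_1, P_0)$ is then an index pair for $f_2$. One must verify that the algorithm terminates at the chosen depth, i.e., that the region is in fact isolable there; because $\varepsilon = 2 > \varepsilon^*$, isolation is comparatively easy, and I expect a modest depth (around $9$, as in Theorem~\ref{thm:connections2}) to suffice. Third, I would feed $P$ and the cached combinatorial enclosure $\mathcal F$ into the automated pipeline of~\cite{DFT}: compute $f_{P*}$ on $H_*(P_1,P_0;\mathbb Z)$ with homcubes/CHomP, then apply Algorithms 5, 6, 7, 8 and Theorem 3.6 of~\cite{DFT} --- which amount to a finite number of checks via Corollary~\ref{cor:Waz} on the action of a recurrent representative of $[f_{P*}]_s$ on the generators of $H_1$ grouped by connected components of $P_1 \setminus P_0$ --- to obtain a semi-conjugacy from $f_2$ to a subshift of finite type $\Sigma_A$ for an explicit transition matrix $A$. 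Finally, $h(f_2) \ge \log\,\mathrm{sp}(A)$ by the semi-conjugacy together with the entropy formula for subshifts of finite type, so it remains to bound $\mathrm{sp}(A)$ rigorously from below --- for instance by exhibiting a nonnegative vector $v$ and a scalar $\lambda$ with $A v \ge \lambda v$ entrywise, or by a validated Perron-eigenvalue enclosure --- which should give $h(f_2) \ge 0.54518888942276$.

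The main obstacle is not any single conceptual step, since all of them are handled by the machinery already assembled, but rather making the larger and more intricate index pair actually isolate and yield a tractable homology computation. Throwing in eight extra periodic orbits with no hand-designed connections risks two failure modes: the one-box neighborhood could keep growing and never stabilize (no isolation at the given depth, forcing subdivision and a much larger computation), or the relative homology $H_1(P_1, P_0;\mathbb Z)$ could become too large for CHomP to reduce in reasonable time. There is also no a-priori guarantee that the ``natural'' connections picked up by Algorithm~\ref{alg:grow-ins} are rich enough to raise the entropy bound all the way to the claimed value rather than merely reproducing the $0.447\ldots$ bound of Theorem~\ref{thm:connections2}; establishing that they are is, in effect, precisely what the whole computation certifies, and the value $0.54518888942276$ is exactly the rigorous lower bound for $\log\,\mathrm{sp}(A)$ that the pipeline outputs on this particular index pair.
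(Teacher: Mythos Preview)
Your proposal is correct and follows essentially the same approach as the paper: construct the index pair by running Algorithm~\ref{alg:periodic} with $P=2$ at $\varepsilon=2$, then insert the hyperbolic period-$3$ and period-$4$ orbits without adding further connections and let Algorithm~\ref{alg:grow-ins} produce the isolating neighborhood; finally, feed the resulting index pair through the automated pipeline of~\cite{DFT} (Algorithms~5--8 and Theorem~3.6, via Corollary~\ref{cor:Waz}) to obtain a semi-conjugacy to a subshift of finite type and bound the spectral radius. The paper records that the resulting index pair has $64{,}185$ boxes, $H_1(P_1,P_0;\mathbb{Z})=\mathbb{Z}^{138}$, and the SFT has $41$ symbols, confirming that your anticipated obstacles (isolation and homology size) do not materialize here.
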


The index pair $(P_1,P_0)$ for Theorem~\ref{thm:awesome} has a total
of 64,185 boxes, with 5,839 in the exit set.  The induced map acts on
$H_1(P_1, P_0;\mathbb{Z}) = \mathbb{Z}^{138}$ and reduces to a SFT on
only 41 symbols.

\subsection{Notes on efficiency and implementation}
\label{sec:implementation}

The computations in this paper were performed in MATLAB on machines
with between 1 and 2 gigabytes (GB) of memory and with clock speeds
between 1.5 and 2.5 gigahertz.  Runtimes ranged from 3 or 4 minutes
for $\bar{\varepsilon}=[1.995,2]$ and $\varepsilon=2.0$, to almost 2
days for $\bar{\varepsilon}=[0.700,0.701]$ and roughly 5 days for
$\varepsilon=0.5$.  As discussed in section~\ref{sec:intervals} and
the beginning of section~\ref{sec:computations}, there was a roughly
inverse exponential relationship between $\varepsilon$ and the runtime
for the intermediate values.

The two most time-consuming subroutines for our computations were
Algorithms~\ref{alg:connections} and~\ref{alg:grow-ins}, which of
course is one reason for our focus on them in this work.  As
$\varepsilon$ decreased, however, Algorithm~\ref{alg:grow-ins}
dominated the runtime, particularly in the bookkeeping step
(maintaining the correct box numbers, as discussed in
section~\ref{sec:datastructures}) and the insertion step, when new
boxes are inserted into the tree.  While the insertions cannot be
avoided, this does suggest that a better tree implementation could
enhance performance greatly.

To conclude, we would like to reiterate how difficult it would be to
reproduce our results for low $\varepsilon$, namely
Theorem~\ref{thm:half} and the lower intervals of
Theorem~\ref{thm:main}, using index pair algorithms from~\cite{DFT}.
As mentioned in section~\ref{sec:growing}, our algorithms are
considerably more memory efficient in certain situations, which we now
explore concretely.  Consider the index pair we obtained in
Theorem~\ref{thm:half} when $\varepsilon=0.5$; there were 568754 boxes
in the index pair, at depth 15, and the adjacency and transition
matrices took up about 0.1GB of memory using our approach.  Using
the~\cite{DFT} approach, one would need to compute the map on
\emph{all} boxes at depth 15.  Using a conservative estimate of 190
bytes per box to store the image and adjacency information (the
average for our index pair was 194.38 bytes), this would require
204GB, which is beyond reasonable at the time of this writing.  More
to the point, our bottom-up approach is clearly orders of magnitude more
efficient in terms of memory.  When one considers the graph
computations that would need to be carried out on the resulting
$2^{30}$-node graph (the transition matrix), it becomes even clearer
that our computations would have been impractical using the top-down approach
of~\cite{DFT}.


\appendix 
\section{Precise bounds for $h(f_\varepsilon)$}
Below we list the actual values for the lower bounds in Theorem
\ref{thm:main} the last column indicates the number of symbols for
each SFT whose entropy bounds that of $f_\varepsilon$ for
$\varepsilon$ in each interval.

\label{app:1}
{\scriptsize
\begin{center}
\begin{tabular}{cc|c}
\hspace{-2pt}$\varepsilon$ interval & $h(f_\varepsilon)\geq$ & \hspace{-2pt}sym\hspace{-2pt} \\[1pt] \hline \\[-6pt] 
\hspace{-4pt}[0.700, 0.701]\hspace{-2pt} & 0.232286 & 51 \\ 
\hspace{-4pt}[0.701, 0.702]\hspace{-2pt} & 0.234189 & 51 \\ 
\hspace{-4pt}[0.702, 0.703]\hspace{-2pt} & 0.232286 & 51 \\ 
\hspace{-4pt}[0.703, 0.704]\hspace{-2pt} & 0.232286 & 51 \\ 
\hspace{-4pt}[0.704, 0.705]\hspace{-2pt} & 0.225504 & 53 \\ 
\hspace{-4pt}[0.705, 0.706]\hspace{-2pt} & 0.232286 & 51 \\ 
\hspace{-4pt}[0.706, 0.707]\hspace{-2pt} & 0.227178 & 51 \\ 
\hspace{-4pt}[0.707, 0.708]\hspace{-2pt} & 0.227178 & 51 \\ 
\hspace{-4pt}[0.708, 0.709]\hspace{-2pt} & 0.223988 & 53 \\ 
\hspace{-4pt}[0.709, 0.710]\hspace{-2pt} & 0.227178 & 53 \\ 
\hspace{-4pt}[0.710, 0.711]\hspace{-2pt} & 0.228742 & 53 \\ 
\hspace{-4pt}[0.711, 0.712]\hspace{-2pt} & 0.223988 & 53 \\ 
\hspace{-4pt}[0.712, 0.713]\hspace{-2pt} & 0.223988 & 53 \\ 
\hspace{-4pt}[0.713, 0.714]\hspace{-2pt} & 0.223988 & 53 \\ 
\hspace{-4pt}[0.714, 0.715]\hspace{-2pt} & 0.227178 & 53 \\ 
\hspace{-4pt}[0.715, 0.716]\hspace{-2pt} & 0.222542 & 53 \\ 
\hspace{-4pt}[0.716, 0.717]\hspace{-2pt} & 0.222365 & 53 \\ 
\hspace{-4pt}[0.717, 0.718]\hspace{-2pt} & 0.225679 & 53 \\ 
\hspace{-4pt}[0.718, 0.719]\hspace{-2pt} & 0.210614 & 39 \\ 
\hspace{-4pt}[0.719, 0.720]\hspace{-2pt} & 0.217650 & 55 \\ 
\hspace{-4pt}[0.720, 0.721]\hspace{-2pt} & 0.223988 & 53 \\ 
\hspace{-4pt}[0.721, 0.722]\hspace{-2pt} & 0.223988 & 53 \\ 
\hspace{-4pt}[0.722, 0.723]\hspace{-2pt} & 0.223988 & 53 \\ 
\hspace{-4pt}[0.723, 0.724]\hspace{-2pt} & 0.222542 & 53 \\ 
\hspace{-4pt}[0.724, 0.725]\hspace{-2pt} & 0.222542 & 53 \\ 
\hspace{-4pt}[0.725, 0.726]\hspace{-2pt} & 0.220813 & 55 \\ 
\hspace{-4pt}[0.726, 0.727]\hspace{-2pt} & 0.222542 & 53 \\ 
\hspace{-4pt}[0.727, 0.728]\hspace{-2pt} & 0.222542 & 53 \\ 
\hspace{-4pt}[0.728, 0.729]\hspace{-2pt} & 0.222542 & 55 \\ 
\hspace{-4pt}[0.729, 0.730]\hspace{-2pt} & 0.220813 & 55 \\ 
\hspace{-4pt}[0.730, 0.731]\hspace{-2pt} & 0.222365 & 55 \\ 
\hspace{-4pt}[0.731, 0.732]\hspace{-2pt} & 0.219153 & 55 \\ 
\hspace{-4pt}[0.732, 0.733]\hspace{-2pt} & 0.219153 & 55 \\ 
\hspace{-4pt}[0.733, 0.734]\hspace{-2pt} & 0.222542 & 55 \\ 
\hspace{-4pt}[0.734, 0.735]\hspace{-2pt} & 0.222542 & 55 \\ 
\hspace{-4pt}[0.735, 0.736]\hspace{-2pt} & 0.222542 & 55 \\ 
\hspace{-4pt}[0.736, 0.737]\hspace{-2pt} & 0.220813 & 55 \\ 
\hspace{-4pt}[0.737, 0.738]\hspace{-2pt} & 0.220813 & 55 \\ 
\hspace{-4pt}[0.738, 0.739]\hspace{-2pt} & 0.220813 & 55 \\ 
\hspace{-4pt}[0.739, 0.740]\hspace{-2pt} & 0.220813 & 55 \\ 
\hspace{-4pt}[0.740, 0.741]\hspace{-2pt} & 0.220813 & 55 \\ 
\hspace{-4pt}[0.741, 0.742]\hspace{-2pt} & 0.220813 & 55 \\ 
\hspace{-4pt}[0.742, 0.743]\hspace{-2pt} & 0.222542 & 55 \\ 
\hspace{-4pt}[0.743, 0.744]\hspace{-2pt} & 0.220813 & 55 \\ 
\hspace{-4pt}[0.744, 0.745]\hspace{-2pt} & 0.217650 & 55 \\ 
\hspace{-4pt}[0.745, 0.746]\hspace{-2pt} & 0.219153 & 55 \\ 
\hspace{-4pt}[0.746, 0.747]\hspace{-2pt} & 0.217650 & 55 \\ 
\hspace{-4pt}[0.747, 0.748]\hspace{-2pt} & 0.217650 & 55 \\ 
\hspace{-4pt}[0.748, 0.749]\hspace{-2pt} & 0.217650 & 55 \\ 
\hspace{-4pt}[0.749, 0.750]\hspace{-2pt} & 0.220813 & 55 \\ 
\hspace{-4pt}[0.750, 0.751]\hspace{-2pt} & 0.222542 & 55 \\ 
\hspace{-4pt}[0.751, 0.752]\hspace{-2pt} & 0.220813 & 55 \\ 
\hspace{-4pt}[0.752, 0.753]\hspace{-2pt} & 0.220813 & 55 \\ 
\hspace{-4pt}[0.753, 0.754]\hspace{-2pt} & 0.220813 & 55 \\ 
\hspace{-4pt}[0.754, 0.755]\hspace{-2pt} & 0.220813 & 55 \\ 
\hspace{-4pt}[0.755, 0.756]\hspace{-2pt} & 0.220813 & 55 \\ 
\end{tabular}
\hspace{0.2 cm}
\begin{tabular}{cc|c}
\hspace{-2pt}$\varepsilon$ interval & $h(f_\varepsilon)\geq$ & \hspace{-2pt}sym\hspace{-2pt} \\[1pt] \hline \\[-6pt] 
\hspace{-4pt}[0.756, 0.757]\hspace{-2pt} & 0.220813 & 55 \\ 
\hspace{-4pt}[0.757, 0.758]\hspace{-2pt} & 0.220813 & 55 \\ 
\hspace{-4pt}[0.758, 0.759]\hspace{-2pt} & 0.219153 & 55 \\ 
\hspace{-4pt}[0.759, 0.760]\hspace{-2pt} & 0.217650 & 55 \\ 
\hspace{-4pt}[0.760, 0.761]\hspace{-2pt} & 0.220813 & 55 \\ 
\hspace{-4pt}[0.761, 0.762]\hspace{-2pt} & 0.219153 & 55 \\ 
\hspace{-4pt}[0.762, 0.763]\hspace{-2pt} & 0.220813 & 55 \\ 
\hspace{-4pt}[0.763, 0.764]\hspace{-2pt} & 0.219153 & 55 \\ 
\hspace{-4pt}[0.764, 0.765]\hspace{-2pt} & 0.219153 & 55 \\ 
\hspace{-4pt}[0.765, 0.766]\hspace{-2pt} & 0.222542 & 55 \\ 
\hspace{-4pt}[0.766, 0.767]\hspace{-2pt} & 0.222542 & 55 \\ 
\hspace{-4pt}[0.767, 0.768]\hspace{-2pt} & 0.222542 & 55 \\ 
\hspace{-4pt}[0.768, 0.769]\hspace{-2pt} & 0.222542 & 55 \\ 
\hspace{-4pt}[0.769, 0.770]\hspace{-2pt} & 0.222542 & 55 \\ 
\hspace{-4pt}[0.770, 0.771]\hspace{-2pt} & 0.220813 & 55 \\ 
\hspace{-4pt}[0.771, 0.772]\hspace{-2pt} & 0.220813 & 55 \\ 
\hspace{-4pt}[0.772, 0.773]\hspace{-2pt} & 0.220813 & 55 \\ 
\hspace{-4pt}[0.773, 0.774]\hspace{-2pt} & 0.217650 & 55 \\ 
\hspace{-4pt}[0.774, 0.775]\hspace{-2pt} & 0.217650 & 55 \\ 
\hspace{-4pt}[0.775, 0.776]\hspace{-2pt} & 0.217650 & 55 \\ 
\hspace{-4pt}[0.776, 0.777]\hspace{-2pt} & 0.216045 & 55 \\ 
\hspace{-4pt}[0.777, 0.778]\hspace{-2pt} & 0.220813 & 55 \\ 
\hspace{-4pt}[0.778, 0.779]\hspace{-2pt} & 0.219153 & 55 \\ 
\hspace{-4pt}[0.779, 0.780]\hspace{-2pt} & 0.217650 & 55 \\ 
\hspace{-4pt}[0.780, 0.781]\hspace{-2pt} & 0.219153 & 55 \\ 
\hspace{-4pt}[0.781, 0.782]\hspace{-2pt} & 0.219153 & 55 \\ 
\hspace{-4pt}[0.782, 0.783]\hspace{-2pt} & 0.219153 & 55 \\ 
\hspace{-4pt}[0.783, 0.784]\hspace{-2pt} & 0.219153 & 55 \\ 
\hspace{-4pt}[0.784, 0.785]\hspace{-2pt} & 0.220813 & 55 \\ 
\hspace{-4pt}[0.785, 0.786]\hspace{-2pt} & 0.220813 & 55 \\ 
\hspace{-4pt}[0.786, 0.787]\hspace{-2pt} & 0.220813 & 55 \\ 
\hspace{-4pt}[0.787, 0.788]\hspace{-2pt} & 0.220813 & 55 \\ 
\hspace{-4pt}[0.788, 0.789]\hspace{-2pt} & 0.220813 & 55 \\ 
\hspace{-4pt}[0.789, 0.790]\hspace{-2pt} & 0.220813 & 55 \\ 
\hspace{-4pt}[0.790, 0.791]\hspace{-2pt} & 0.220813 & 55 \\ 
\hspace{-4pt}[0.791, 0.792]\hspace{-2pt} & 0.219153 & 55 \\ 
\hspace{-4pt}[0.792, 0.793]\hspace{-2pt} & 0.217650 & 55 \\ 
\hspace{-4pt}[0.793, 0.794]\hspace{-2pt} & 0.217650 & 55 \\ 
\hspace{-4pt}[0.794, 0.795]\hspace{-2pt} & 0.219153 & 55 \\ 
\hspace{-4pt}[0.795, 0.796]\hspace{-2pt} & 0.217650 & 55 \\ 
\hspace{-4pt}[0.796, 0.797]\hspace{-2pt} & 0.219153 & 55 \\ 
\hspace{-4pt}[0.797, 0.798]\hspace{-2pt} & 0.219153 & 55 \\ 
\hspace{-4pt}[0.798, 0.799]\hspace{-2pt} & 0.219153 & 55 \\ 
\hspace{-4pt}[0.799, 0.800]\hspace{-2pt} & 0.219153 & 55 \\ 
\hspace{-4pt}[0.800, 0.801]\hspace{-2pt} & 0.257972 & 43 \\ 
\hspace{-4pt}[0.801, 0.802]\hspace{-2pt} & 0.255740 & 45 \\ 
\hspace{-4pt}[0.802, 0.803]\hspace{-2pt} & 0.255740 & 43 \\ 
\hspace{-4pt}[0.803, 0.804]\hspace{-2pt} & 0.255740 & 43 \\ 
\hspace{-4pt}[0.804, 0.805]\hspace{-2pt} & 0.253746 & 45 \\ 
\hspace{-4pt}[0.805, 0.806]\hspace{-2pt} & 0.255740 & 45 \\ 
\hspace{-4pt}[0.806, 0.807]\hspace{-2pt} & 0.255740 & 45 \\ 
\hspace{-4pt}[0.807, 0.808]\hspace{-2pt} & 0.255740 & 45 \\ 
\hspace{-4pt}[0.808, 0.809]\hspace{-2pt} & 0.257972 & 43 \\ 
\hspace{-4pt}[0.809, 0.810]\hspace{-2pt} & 0.255740 & 45 \\ 
\hspace{-4pt}[0.810, 0.811]\hspace{-2pt} & 0.251858 & 45 \\ 
\hspace{-4pt}[0.811, 0.812]\hspace{-2pt} & 0.251858 & 45 \\ 
\end{tabular}
\hspace{0.2 cm}
\begin{tabular}{cc|c}
\hspace{-2pt}$\varepsilon$ interval & $h(f_\varepsilon)\geq$ & \hspace{-2pt}sym\hspace{-2pt} \\[1pt] \hline \\[-6pt] 
\hspace{-4pt}[0.812, 0.813]\hspace{-2pt} & 0.251858 & 45 \\ 
\hspace{-4pt}[0.813, 0.814]\hspace{-2pt} & 0.249549 & 45 \\ 
\hspace{-4pt}[0.814, 0.815]\hspace{-2pt} & 0.247344 & 47 \\ 
\hspace{-4pt}[0.815, 0.816]\hspace{-2pt} & 0.247344 & 47 \\ 
\hspace{-4pt}[0.816, 0.817]\hspace{-2pt} & 0.251858 & 47 \\ 
\hspace{-4pt}[0.817, 0.818]\hspace{-2pt} & 0.249549 & 45 \\ 
\hspace{-4pt}[0.818, 0.819]\hspace{-2pt} & 0.249549 & 47 \\ 
\hspace{-4pt}[0.819, 0.820]\hspace{-2pt} & 0.249549 & 47 \\ 
\hspace{-4pt}[0.820, 0.821]\hspace{-2pt} & 0.251858 & 45 \\ 
\hspace{-4pt}[0.821, 0.822]\hspace{-2pt} & 0.251858 & 47 \\ 
\hspace{-4pt}[0.822, 0.823]\hspace{-2pt} & 0.251858 & 45 \\ 
\hspace{-4pt}[0.823, 0.824]\hspace{-2pt} & 0.251858 & 47 \\ 
\hspace{-4pt}[0.824, 0.825]\hspace{-2pt} & 0.249549 & 47 \\ 
\hspace{-4pt}[0.825, 0.826]\hspace{-2pt} & 0.249549 & 47 \\ 
\hspace{-4pt}[0.826, 0.827]\hspace{-2pt} & 0.249549 & 47 \\ 
\hspace{-4pt}[0.827, 0.828]\hspace{-2pt} & 0.251858 & 47 \\ 
\hspace{-4pt}[0.828, 0.829]\hspace{-2pt} & 0.247344 & 47 \\ 
\hspace{-4pt}[0.829, 0.830]\hspace{-2pt} & 0.251858 & 45 \\ 
\hspace{-4pt}[0.830, 0.831]\hspace{-2pt} & 0.247344 & 47 \\ 
\hspace{-4pt}[0.831, 0.832]\hspace{-2pt} & 0.247344 & 47 \\ 
\hspace{-4pt}[0.832, 0.833]\hspace{-2pt} & 0.249549 & 47 \\ 
\hspace{-4pt}[0.833, 0.834]\hspace{-2pt} & 0.251858 & 47 \\ 
\hspace{-4pt}[0.834, 0.835]\hspace{-2pt} & 0.251858 & 47 \\ 
\hspace{-4pt}[0.835, 0.836]\hspace{-2pt} & 0.251858 & 47 \\ 
\hspace{-4pt}[0.836, 0.837]\hspace{-2pt} & 0.251858 & 47 \\ 
\hspace{-4pt}[0.837, 0.838]\hspace{-2pt} & 0.251858 & 47 \\ 
\hspace{-4pt}[0.838, 0.839]\hspace{-2pt} & 0.251858 & 47 \\ 
\hspace{-4pt}[0.839, 0.840]\hspace{-2pt} & 0.251858 & 47 \\ 
\hspace{-4pt}[0.840, 0.841]\hspace{-2pt} & 0.251858 & 47 \\ 
\hspace{-4pt}[0.841, 0.842]\hspace{-2pt} & 0.247610 & 47 \\ 
\hspace{-4pt}[0.842, 0.843]\hspace{-2pt} & 0.251858 & 47 \\ 
\hspace{-4pt}[0.843, 0.844]\hspace{-2pt} & 0.251858 & 47 \\ 
\hspace{-4pt}[0.844, 0.845]\hspace{-2pt} & 0.249549 & 47 \\ 
\hspace{-4pt}[0.845, 0.846]\hspace{-2pt} & 0.249549 & 47 \\ 
\hspace{-4pt}[0.846, 0.847]\hspace{-2pt} & 0.249549 & 47 \\ 
\hspace{-4pt}[0.847, 0.848]\hspace{-2pt} & 0.249549 & 47 \\ 
\hspace{-4pt}[0.848, 0.849]\hspace{-2pt} & 0.249549 & 47 \\ 
\hspace{-4pt}[0.849, 0.850]\hspace{-2pt} & 0.249549 & 47 \\ 
\hspace{-4pt}[0.850, 0.851]\hspace{-2pt} & 0.247344 & 47 \\ 
\hspace{-4pt}[0.851, 0.852]\hspace{-2pt} & 0.247344 & 47 \\ 
\hspace{-4pt}[0.852, 0.853]\hspace{-2pt} & 0.245376 & 47 \\ 
\hspace{-4pt}[0.853, 0.854]\hspace{-2pt} & 0.251858 & 47 \\ 
\hspace{-4pt}[0.854, 0.855]\hspace{-2pt} & 0.251858 & 47 \\ 
\hspace{-4pt}[0.855, 0.856]\hspace{-2pt} & 0.249549 & 47 \\ 
\hspace{-4pt}[0.856, 0.857]\hspace{-2pt} & 0.249549 & 47 \\ 
\hspace{-4pt}[0.857, 0.858]\hspace{-2pt} & 0.249549 & 47 \\ 
\hspace{-4pt}[0.858, 0.859]\hspace{-2pt} & 0.247344 & 47 \\ 
\hspace{-4pt}[0.859, 0.860]\hspace{-2pt} & 0.247344 & 47 \\ 
\hspace{-4pt}[0.860, 0.861]\hspace{-2pt} & 0.249549 & 47 \\ 
\hspace{-4pt}[0.861, 0.862]\hspace{-2pt} & 0.247344 & 47 \\ 
\hspace{-4pt}[0.862, 0.863]\hspace{-2pt} & 0.249549 & 47 \\ 
\hspace{-4pt}[0.863, 0.864]\hspace{-2pt} & 0.249549 & 47 \\ 
\hspace{-4pt}[0.864, 0.865]\hspace{-2pt} & 0.249549 & 47 \\ 
\hspace{-4pt}[0.865, 0.866]\hspace{-2pt} & 0.249549 & 47 \\ 
\hspace{-4pt}[0.866, 0.867]\hspace{-2pt} & 0.251858 & 47 \\ 
\hspace{-4pt}[0.867, 0.868]\hspace{-2pt} & 0.247344 & 47 \\ 
\end{tabular}
\end{center}
}
{\scriptsize
\begin{center}
\begin{tabular}{cc|c}
\hspace{-2pt}$\varepsilon$ interval & $h(f_\varepsilon)\geq$ & \hspace{-2pt}sym\hspace{-2pt} \\[1pt] \hline \\[-6pt] 
\hspace{-4pt}[0.868, 0.869]\hspace{-2pt} & 0.247344 & 47 \\ 
\hspace{-4pt}[0.869, 0.870]\hspace{-2pt} & 0.247344 & 47 \\ 
\hspace{-4pt}[0.870, 0.871]\hspace{-2pt} & 0.247344 & 47 \\ 
\hspace{-4pt}[0.871, 0.872]\hspace{-2pt} & 0.247344 & 47 \\ 
\hspace{-4pt}[0.872, 0.873]\hspace{-2pt} & 0.247344 & 47 \\ 
\hspace{-4pt}[0.873, 0.874]\hspace{-2pt} & 0.245376 & 47 \\ 
\hspace{-4pt}[0.874, 0.875]\hspace{-2pt} & 0.245376 & 47 \\ 
\hspace{-4pt}[0.875, 0.876]\hspace{-2pt} & 0.251858 & 47 \\ 
\hspace{-4pt}[0.876, 0.877]\hspace{-2pt} & 0.249549 & 47 \\ 
\hspace{-4pt}[0.877, 0.878]\hspace{-2pt} & 0.247344 & 47 \\ 
\hspace{-4pt}[0.878, 0.879]\hspace{-2pt} & 0.249549 & 47 \\ 
\hspace{-4pt}[0.879, 0.880]\hspace{-2pt} & 0.249549 & 47 \\ 
\hspace{-4pt}[0.880, 0.881]\hspace{-2pt} & 0.249549 & 47 \\ 
\hspace{-4pt}[0.881, 0.882]\hspace{-2pt} & 0.249549 & 47 \\ 
\hspace{-4pt}[0.882, 0.883]\hspace{-2pt} & 0.249549 & 47 \\ 
\hspace{-4pt}[0.883, 0.884]\hspace{-2pt} & 0.249549 & 47 \\ 
\hspace{-4pt}[0.884, 0.885]\hspace{-2pt} & 0.249549 & 47 \\ 
\hspace{-4pt}[0.885, 0.886]\hspace{-2pt} & 0.249549 & 47 \\ 
\hspace{-4pt}[0.886, 0.887]\hspace{-2pt} & 0.247344 & 47 \\ 
\hspace{-4pt}[0.887, 0.888]\hspace{-2pt} & 0.247344 & 47 \\ 
\hspace{-4pt}[0.888, 0.889]\hspace{-2pt} & 0.249549 & 47 \\ 
\hspace{-4pt}[0.889, 0.890]\hspace{-2pt} & 0.251858 & 47 \\ 
\hspace{-4pt}[0.890, 0.891]\hspace{-2pt} & 0.247344 & 47 \\ 
\hspace{-4pt}[0.891, 0.892]\hspace{-2pt} & 0.249549 & 47 \\ 
\hspace{-4pt}[0.892, 0.893]\hspace{-2pt} & 0.247344 & 47 \\ 
\hspace{-4pt}[0.893, 0.894]\hspace{-2pt} & 0.247344 & 47 \\ 
\hspace{-4pt}[0.894, 0.895]\hspace{-2pt} & 0.247344 & 47 \\ 
\hspace{-4pt}[0.895, 0.896]\hspace{-2pt} & 0.247344 & 47 \\ 
\hspace{-4pt}[0.896, 0.897]\hspace{-2pt} & 0.247344 & 47 \\ 
\hspace{-4pt}[0.897, 0.898]\hspace{-2pt} & 0.247344 & 47 \\ 
\hspace{-4pt}[0.898, 0.899]\hspace{-2pt} & 0.247344 & 47 \\ 
\hspace{-4pt}[0.899, 0.900]\hspace{-2pt} & 0.247344 & 47 \\ 
\hspace{-4pt}[0.900, 0.901]\hspace{-2pt} & 0.247344 & 47 \\ 
\hspace{-4pt}[0.901, 0.902]\hspace{-2pt} & 0.247344 & 47 \\ 
\hspace{-4pt}[0.902, 0.903]\hspace{-2pt} & 0.247344 & 47 \\ 
\hspace{-4pt}[0.903, 0.904]\hspace{-2pt} & 0.249549 & 47 \\ 
\hspace{-4pt}[0.904, 0.905]\hspace{-2pt} & 0.249549 & 47 \\ 
\hspace{-4pt}[0.905, 0.906]\hspace{-2pt} & 0.249549 & 47 \\ 
\hspace{-4pt}[0.906, 0.907]\hspace{-2pt} & 0.249549 & 47 \\ 
\hspace{-4pt}[0.907, 0.908]\hspace{-2pt} & 0.251858 & 47 \\ 
\hspace{-4pt}[0.908, 0.909]\hspace{-2pt} & 0.249549 & 47 \\ 
\hspace{-4pt}[0.909, 0.910]\hspace{-2pt} & 0.249549 & 47 \\ 
\hspace{-4pt}[0.910, 0.911]\hspace{-2pt} & 0.276723 & 27 \\ 
\hspace{-4pt}[0.911, 0.912]\hspace{-2pt} & 0.249549 & 47 \\ 
\hspace{-4pt}[0.912, 0.913]\hspace{-2pt} & 0.251858 & 47 \\ 
\hspace{-4pt}[0.913, 0.914]\hspace{-2pt} & 0.249549 & 47 \\ 
\hspace{-4pt}[0.914, 0.915]\hspace{-2pt} & 0.249549 & 47 \\ 
\hspace{-4pt}[0.915, 0.916]\hspace{-2pt} & 0.247344 & 47 \\ 
\hspace{-4pt}[0.916, 0.917]\hspace{-2pt} & 0.274243 & 27 \\ 
\hspace{-4pt}[0.917, 0.918]\hspace{-2pt} & 0.249549 & 47 \\ 
\hspace{-4pt}[0.918, 0.919]\hspace{-2pt} & 0.257110 & 25 \\ 
\hspace{-4pt}[0.919, 0.920]\hspace{-2pt} & 0.307453 & 35 \\ 
\hspace{-4pt}[0.920, 0.921]\hspace{-2pt} & 0.249549 & 47 \\ 
\hspace{-4pt}[0.921, 0.922]\hspace{-2pt} & 0.276723 & 27 \\ 
\hspace{-4pt}[0.922, 0.923]\hspace{-2pt} & 0.276723 & 27 \\ 
\hspace{-4pt}[0.923, 0.924]\hspace{-2pt} & 0.274243 & 27 \\ 
\end{tabular}
\hspace{0.2 cm}
\begin{tabular}{cc|c}
\hspace{-2pt}$\varepsilon$ interval & $h(f_\varepsilon)\geq$ & \hspace{-2pt}sym\hspace{-2pt} \\[1pt] \hline \\[-6pt] 
\hspace{-4pt}[0.924, 0.925]\hspace{-2pt} & 0.276723 & 27 \\ 
\hspace{-4pt}[0.925, 0.926]\hspace{-2pt} & 0.288442 & 37 \\ 
\hspace{-4pt}[0.926, 0.927]\hspace{-2pt} & 0.276723 & 27 \\ 
\hspace{-4pt}[0.927, 0.928]\hspace{-2pt} & 0.276723 & 27 \\ 
\hspace{-4pt}[0.928, 0.929]\hspace{-2pt} & 0.294293 & 37 \\ 
\hspace{-4pt}[0.929, 0.930]\hspace{-2pt} & 0.288442 & 37 \\ 
\hspace{-4pt}[0.930, 0.931]\hspace{-2pt} & 0.294293 & 37 \\ 
\hspace{-4pt}[0.931, 0.932]\hspace{-2pt} & 0.291280 & 37 \\ 
\hspace{-4pt}[0.932, 0.933]\hspace{-2pt} & 0.291280 & 37 \\ 
\hspace{-4pt}[0.933, 0.934]\hspace{-2pt} & 0.294293 & 37 \\ 
\hspace{-4pt}[0.934, 0.935]\hspace{-2pt} & 0.288442 & 37 \\ 
\hspace{-4pt}[0.935, 0.936]\hspace{-2pt} & 0.288442 & 39 \\ 
\hspace{-4pt}[0.936, 0.937]\hspace{-2pt} & 0.297475 & 37 \\ 
\hspace{-4pt}[0.937, 0.938]\hspace{-2pt} & 0.297475 & 37 \\ 
\hspace{-4pt}[0.938, 0.939]\hspace{-2pt} & 0.288442 & 39 \\ 
\hspace{-4pt}[0.939, 0.940]\hspace{-2pt} & 0.285349 & 39 \\ 
\hspace{-4pt}[0.940, 0.941]\hspace{-2pt} & 0.288442 & 39 \\ 
\hspace{-4pt}[0.941, 0.942]\hspace{-2pt} & 0.276723 & 27 \\ 
\hspace{-4pt}[0.942, 0.943]\hspace{-2pt} & 0.276723 & 27 \\ 
\hspace{-4pt}[0.943, 0.944]\hspace{-2pt} & 0.294293 & 37 \\ 
\hspace{-4pt}[0.944, 0.945]\hspace{-2pt} & 0.294293 & 37 \\ 
\hspace{-4pt}[0.945, 0.946]\hspace{-2pt} & 0.297475 & 37 \\ 
\hspace{-4pt}[0.946, 0.947]\hspace{-2pt} & 0.294293 & 37 \\ 
\hspace{-4pt}[0.947, 0.948]\hspace{-2pt} & 0.291706 & 39 \\ 
\hspace{-4pt}[0.948, 0.949]\hspace{-2pt} & 0.291706 & 39 \\ 
\hspace{-4pt}[0.949, 0.950]\hspace{-2pt} & 0.288442 & 39 \\ 
\hspace{-4pt}[0.950, 0.951]\hspace{-2pt} & 0.291706 & 39 \\ 
\hspace{-4pt}[0.951, 0.952]\hspace{-2pt} & 0.291706 & 39 \\ 
\hspace{-4pt}[0.952, 0.953]\hspace{-2pt} & 0.291706 & 39 \\ 
\hspace{-4pt}[0.953, 0.954]\hspace{-2pt} & 0.294293 & 37 \\ 
\hspace{-4pt}[0.954, 0.955]\hspace{-2pt} & 0.294293 & 37 \\ 
\hspace{-4pt}[0.955, 0.956]\hspace{-2pt} & 0.291706 & 39 \\ 
\hspace{-4pt}[0.956, 0.957]\hspace{-2pt} & 0.291706 & 37 \\ 
\hspace{-4pt}[0.957, 0.958]\hspace{-2pt} & 0.285349 & 39 \\ 
\hspace{-4pt}[0.958, 0.959]\hspace{-2pt} & 0.291706 & 39 \\ 
\hspace{-4pt}[0.959, 0.960]\hspace{-2pt} & 0.291706 & 39 \\ 
\hspace{-4pt}[0.960, 0.961]\hspace{-2pt} & 0.291706 & 39 \\ 
\hspace{-4pt}[0.961, 0.962]\hspace{-2pt} & 0.291706 & 39 \\ 
\hspace{-4pt}[0.962, 0.963]\hspace{-2pt} & 0.288442 & 39 \\ 
\hspace{-4pt}[0.963, 0.964]\hspace{-2pt} & 0.288442 & 39 \\ 
\hspace{-4pt}[0.964, 0.965]\hspace{-2pt} & 0.291706 & 39 \\ 
\hspace{-4pt}[0.965, 0.966]\hspace{-2pt} & 0.291706 & 39 \\ 
\hspace{-4pt}[0.966, 0.967]\hspace{-2pt} & 0.291706 & 39 \\ 
\hspace{-4pt}[0.967, 0.968]\hspace{-2pt} & 0.288442 & 39 \\ 
\hspace{-4pt}[0.968, 0.969]\hspace{-2pt} & 0.288442 & 39 \\ 
\hspace{-4pt}[0.969, 0.970]\hspace{-2pt} & 0.285349 & 39 \\ 
\hspace{-4pt}[0.970, 0.971]\hspace{-2pt} & 0.291706 & 39 \\ 
\hspace{-4pt}[0.971, 0.972]\hspace{-2pt} & 0.285349 & 39 \\ 
\hspace{-4pt}[0.972, 0.973]\hspace{-2pt} & 0.288442 & 39 \\ 
\hspace{-4pt}[0.973, 0.974]\hspace{-2pt} & 0.288442 & 39 \\ 
\hspace{-4pt}[0.974, 0.975]\hspace{-2pt} & 0.288442 & 39 \\ 
\hspace{-4pt}[0.975, 0.976]\hspace{-2pt} & 0.288442 & 39 \\ 
\hspace{-4pt}[0.976, 0.977]\hspace{-2pt} & 0.288442 & 39 \\ 
\hspace{-4pt}[0.977, 0.978]\hspace{-2pt} & 0.288442 & 39 \\ 
\hspace{-4pt}[0.978, 0.979]\hspace{-2pt} & 0.291706 & 39 \\ 
\hspace{-4pt}[0.979, 0.980]\hspace{-2pt} & 0.285349 & 39 \\ 
\end{tabular}
\hspace{0.2 cm}
\begin{tabular}{cc|c}
\hspace{-2pt}$\varepsilon$ interval & $h(f_\varepsilon)\geq$ & \hspace{-2pt}sym\hspace{-2pt} \\[1pt] \hline \\[-6pt] 
\hspace{-4pt}[0.980, 0.981]\hspace{-2pt} & 0.285349 & 39 \\ 
\hspace{-4pt}[0.981, 0.982]\hspace{-2pt} & 0.285349 & 39 \\ 
\hspace{-4pt}[0.982, 0.983]\hspace{-2pt} & 0.285349 & 39 \\ 
\hspace{-4pt}[0.983, 0.984]\hspace{-2pt} & 0.285349 & 39 \\ 
\hspace{-4pt}[0.984, 0.985]\hspace{-2pt} & 0.285349 & 39 \\ 
\hspace{-4pt}[0.985, 0.986]\hspace{-2pt} & 0.288442 & 39 \\ 
\hspace{-4pt}[0.986, 0.987]\hspace{-2pt} & 0.285349 & 39 \\ 
\hspace{-4pt}[0.987, 0.988]\hspace{-2pt} & 0.291706 & 39 \\ 
\hspace{-4pt}[0.988, 0.989]\hspace{-2pt} & 0.285349 & 39 \\ 
\hspace{-4pt}[0.989, 0.990]\hspace{-2pt} & 0.285349 & 39 \\ 
\hspace{-4pt}[0.990, 0.991]\hspace{-2pt} & 0.285349 & 39 \\ 
\hspace{-4pt}[0.991, 0.992]\hspace{-2pt} & 0.288442 & 39 \\ 
\hspace{-4pt}[0.992, 0.993]\hspace{-2pt} & 0.285349 & 39 \\ 
\hspace{-4pt}[0.993, 0.994]\hspace{-2pt} & 0.291706 & 39 \\ 
\hspace{-4pt}[0.994, 0.995]\hspace{-2pt} & 0.291706 & 39 \\ 
\hspace{-4pt}[0.995, 0.996]\hspace{-2pt} & 0.291706 & 39 \\ 
\hspace{-4pt}[0.996, 0.997]\hspace{-2pt} & 0.288442 & 39 \\ 
\hspace{-4pt}[0.997, 0.998]\hspace{-2pt} & 0.291706 & 39 \\ 
\hspace{-4pt}[0.998, 0.999]\hspace{-2pt} & 0.288442 & 39 \\ 
\hspace{-4pt}[0.999, 1.000]\hspace{-2pt} & 0.288442 & 39 \\ 
\hspace{-4pt}[1.000, 1.005]\hspace{-2pt} & 0.313677 & 35 \\ 
\hspace{-4pt}[1.005, 1.010]\hspace{-2pt} & 0.300202 & 35 \\ 
\hspace{-4pt}[1.010, 1.015]\hspace{-2pt} & 0.303084 & 35 \\ 
\hspace{-4pt}[1.015, 1.020]\hspace{-2pt} & 0.276723 & 27 \\ 
\hspace{-4pt}[1.020, 1.025]\hspace{-2pt} & 0.300202 & 35 \\ 
\hspace{-4pt}[1.025, 1.030]\hspace{-2pt} & 0.297053 & 37 \\ 
\hspace{-4pt}[1.030, 1.035]\hspace{-2pt} & 0.297053 & 37 \\ 
\hspace{-4pt}[1.035, 1.040]\hspace{-2pt} & 0.300202 & 35 \\ 
\hspace{-4pt}[1.040, 1.045]\hspace{-2pt} & 0.291706 & 37 \\ 
\hspace{-4pt}[1.045, 1.050]\hspace{-2pt} & 0.274243 & 36 \\ 
\hspace{-4pt}[1.050, 1.055]\hspace{-2pt} & 0.297053 & 39 \\ 
\hspace{-4pt}[1.055, 1.060]\hspace{-2pt} & 0.300202 & 37 \\ 
\hspace{-4pt}[1.060, 1.065]\hspace{-2pt} & 0.291706 & 37 \\ 
\hspace{-4pt}[1.065, 1.070]\hspace{-2pt} & 0.267938 & 39 \\ 
\hspace{-4pt}[1.070, 1.075]\hspace{-2pt} & 0.297053 & 36 \\ 
\hspace{-4pt}[1.075, 1.080]\hspace{-2pt} & 0.294293 & 37 \\ 
\hspace{-4pt}[1.080, 1.085]\hspace{-2pt} & 0.300202 & 37 \\ 
\hspace{-4pt}[1.085, 1.090]\hspace{-2pt} & 0.294293 & 37 \\ 
\hspace{-4pt}[1.090, 1.095]\hspace{-2pt} & 0.291706 & 37 \\ 
\hspace{-4pt}[1.095, 1.100]\hspace{-2pt} & 0.294293 & 37 \\ 
\hspace{-4pt}[1.100, 1.105]\hspace{-2pt} & 0.294293 & 37 \\ 
\hspace{-4pt}[1.105, 1.110]\hspace{-2pt} & 0.272833 & 39 \\ 
\hspace{-4pt}[1.110, 1.115]\hspace{-2pt} & 0.297053 & 37 \\ 
\hspace{-4pt}[1.115, 1.120]\hspace{-2pt} & 0.297053 & 37 \\ 
\hspace{-4pt}[1.120, 1.125]\hspace{-2pt} & 0.297053 & 37 \\ 
\hspace{-4pt}[1.125, 1.130]\hspace{-2pt} & 0.291706 & 37 \\ 
\hspace{-4pt}[1.130, 1.135]\hspace{-2pt} & 0.297475 & 37 \\ 
\hspace{-4pt}[1.135, 1.140]\hspace{-2pt} & 0.291706 & 37 \\ 
\hspace{-4pt}[1.140, 1.145]\hspace{-2pt} & 0.291706 & 37 \\ 
\hspace{-4pt}[1.145, 1.150]\hspace{-2pt} & 0.297053 & 37 \\ 
\hspace{-4pt}[1.150, 1.155]\hspace{-2pt} & 0.294293 & 37 \\ 
\hspace{-4pt}[1.155, 1.160]\hspace{-2pt} & 0.297053 & 37 \\ 
\hspace{-4pt}[1.160, 1.165]\hspace{-2pt} & 0.297053 & 37 \\ 
\hspace{-4pt}[1.165, 1.170]\hspace{-2pt} & 0.297053 & 37 \\ 
\hspace{-4pt}[1.170, 1.175]\hspace{-2pt} & 0.297053 & 37 \\ 
\hspace{-4pt}[1.175, 1.180]\hspace{-2pt} & 0.294293 & 37 \\ 
\end{tabular}
\end{center}
}
{\scriptsize
\begin{center}
\begin{tabular}{cc|c}
\hspace{-2pt}$\varepsilon$ interval & $h(f_\varepsilon)\geq$ & \hspace{-2pt}sym\hspace{-2pt} \\[1pt] \hline \\[-6pt] 
\hspace{-4pt}[1.180, 1.185]\hspace{-2pt} & 0.297475 & 37 \\ 
\hspace{-4pt}[1.185, 1.190]\hspace{-2pt} & 0.300202 & 37 \\ 
\hspace{-4pt}[1.190, 1.195]\hspace{-2pt} & 0.300202 & 37 \\ 
\hspace{-4pt}[1.195, 1.200]\hspace{-2pt} & 0.297053 & 37 \\ 
\hspace{-4pt}[1.200, 1.205]\hspace{-2pt} & 0.353423 & 31 \\ 
\hspace{-4pt}[1.205, 1.210]\hspace{-2pt} & 0.349617 & 29 \\ 
\hspace{-4pt}[1.210, 1.215]\hspace{-2pt} & 0.344595 & 29 \\ 
\hspace{-4pt}[1.215, 1.220]\hspace{-2pt} & 0.340662 & 31 \\ 
\hspace{-4pt}[1.220, 1.225]\hspace{-2pt} & 0.344595 & 29 \\ 
\hspace{-4pt}[1.225, 1.230]\hspace{-2pt} & 0.353423 & 29 \\ 
\hspace{-4pt}[1.230, 1.235]\hspace{-2pt} & 0.340662 & 31 \\ 
\hspace{-4pt}[1.235, 1.240]\hspace{-2pt} & 0.344595 & 29 \\ 
\hspace{-4pt}[1.240, 1.245]\hspace{-2pt} & 0.339892 & 31 \\ 
\hspace{-4pt}[1.245, 1.250]\hspace{-2pt} & 0.339892 & 29 \\ 
\hspace{-4pt}[1.250, 1.255]\hspace{-2pt} & 0.344595 & 31 \\ 
\hspace{-4pt}[1.255, 1.260]\hspace{-2pt} & 0.344595 & 31 \\ 
\hspace{-4pt}[1.260, 1.265]\hspace{-2pt} & 0.339892 & 31 \\ 
\hspace{-4pt}[1.265, 1.270]\hspace{-2pt} & 0.349617 & 31 \\ 
\hspace{-4pt}[1.270, 1.275]\hspace{-2pt} & 0.349617 & 31 \\ 
\hspace{-4pt}[1.275, 1.280]\hspace{-2pt} & 0.353423 & 31 \\ 
\hspace{-4pt}[1.280, 1.285]\hspace{-2pt} & 0.344595 & 31 \\ 
\hspace{-4pt}[1.285, 1.290]\hspace{-2pt} & 0.344595 & 31 \\ 
\hspace{-4pt}[1.290, 1.295]\hspace{-2pt} & 0.339892 & 31 \\ 
\hspace{-4pt}[1.295, 1.300]\hspace{-2pt} & 0.344595 & 31 \\ 
\hspace{-4pt}[1.300, 1.305]\hspace{-2pt} & 0.339892 & 31 \\ 
\hspace{-4pt}[1.305, 1.310]\hspace{-2pt} & 0.344595 & 31 \\ 
\hspace{-4pt}[1.310, 1.315]\hspace{-2pt} & 0.344595 & 31 \\ 
\hspace{-4pt}[1.315, 1.320]\hspace{-2pt} & 0.344595 & 31 \\ 
\hspace{-4pt}[1.320, 1.325]\hspace{-2pt} & 0.344595 & 31 \\ 
\hspace{-4pt}[1.325, 1.330]\hspace{-2pt} & 0.344595 & 31 \\ 
\hspace{-4pt}[1.330, 1.335]\hspace{-2pt} & 0.339892 & 29 \\ 
\hspace{-4pt}[1.335, 1.340]\hspace{-2pt} & 0.339892 & 31 \\ 
\hspace{-4pt}[1.340, 1.345]\hspace{-2pt} & 0.339892 & 31 \\ 
\hspace{-4pt}[1.345, 1.350]\hspace{-2pt} & 0.339892 & 31 \\ 
\hspace{-4pt}[1.350, 1.355]\hspace{-2pt} & 0.339892 & 31 \\ 
\hspace{-4pt}[1.355, 1.360]\hspace{-2pt} & 0.344595 & 31 \\ 
\hspace{-4pt}[1.360, 1.365]\hspace{-2pt} & 0.339892 & 31 \\ 
\hspace{-4pt}[1.365, 1.370]\hspace{-2pt} & 0.339892 & 31 \\ 
\hspace{-4pt}[1.370, 1.375]\hspace{-2pt} & 0.339892 & 31 \\ 
\hspace{-4pt}[1.375, 1.380]\hspace{-2pt} & 0.348848 & 31 \\ 
\hspace{-4pt}[1.380, 1.385]\hspace{-2pt} & 0.344595 & 31 \\ 
\hspace{-4pt}[1.385, 1.390]\hspace{-2pt} & 0.349617 & 31 \\ 
\hspace{-4pt}[1.390, 1.395]\hspace{-2pt} & 0.349617 & 31 \\ 
\hspace{-4pt}[1.395, 1.400]\hspace{-2pt} & 0.344595 & 31 \\ 
\hspace{-4pt}[1.400, 1.405]\hspace{-2pt} & 0.344595 & 31 \\ 
\hspace{-4pt}[1.405, 1.410]\hspace{-2pt} & 0.349617 & 31 \\ 
\hspace{-4pt}[1.410, 1.415]\hspace{-2pt} & 0.349617 & 31 \\ 
\hspace{-4pt}[1.415, 1.420]\hspace{-2pt} & 0.344595 & 31 \\ 
\hspace{-4pt}[1.420, 1.425]\hspace{-2pt} & 0.339892 & 31 \\ 
\hspace{-4pt}[1.425, 1.430]\hspace{-2pt} & 0.339892 & 31 \\ 
\hspace{-4pt}[1.430, 1.435]\hspace{-2pt} & 0.344595 & 31 \\ 
\hspace{-4pt}[1.435, 1.440]\hspace{-2pt} & 0.344595 & 31 \\ 
\hspace{-4pt}[1.440, 1.445]\hspace{-2pt} & 0.339892 & 29 \\ 
\hspace{-4pt}[1.445, 1.450]\hspace{-2pt} & 0.339892 & 31 \\ 
\hspace{-4pt}[1.450, 1.455]\hspace{-2pt} & 0.344595 & 31 \\ 
\hspace{-4pt}[1.455, 1.460]\hspace{-2pt} & 0.344595 & 31 \\ 
\end{tabular}
\hspace{0.2 cm}
\begin{tabular}{cc|c}
\hspace{-2pt}$\varepsilon$ interval & $h(f_\varepsilon)\geq$ & \hspace{-2pt}sym\hspace{-2pt} \\[1pt] \hline \\[-6pt] 
\hspace{-4pt}[1.460, 1.465]\hspace{-2pt} & 0.344595 & 31 \\ 
\hspace{-4pt}[1.465, 1.470]\hspace{-2pt} & 0.344595 & 31 \\ 
\hspace{-4pt}[1.470, 1.475]\hspace{-2pt} & 0.344595 & 31 \\ 
\hspace{-4pt}[1.475, 1.480]\hspace{-2pt} & 0.344595 & 31 \\ 
\hspace{-4pt}[1.480, 1.485]\hspace{-2pt} & 0.339892 & 31 \\ 
\hspace{-4pt}[1.485, 1.490]\hspace{-2pt} & 0.339892 & 31 \\ 
\hspace{-4pt}[1.490, 1.495]\hspace{-2pt} & 0.344595 & 31 \\ 
\hspace{-4pt}[1.495, 1.500]\hspace{-2pt} & 0.344595 & 31 \\ 
\hspace{-4pt}[1.500, 1.505]\hspace{-2pt} & 0.349617 & 31 \\ 
\hspace{-4pt}[1.505, 1.510]\hspace{-2pt} & 0.349617 & 31 \\ 
\hspace{-4pt}[1.510, 1.515]\hspace{-2pt} & 0.344595 & 31 \\ 
\hspace{-4pt}[1.515, 1.520]\hspace{-2pt} & 0.344595 & 31 \\ 
\hspace{-4pt}[1.520, 1.525]\hspace{-2pt} & 0.344595 & 31 \\ 
\hspace{-4pt}[1.525, 1.530]\hspace{-2pt} & 0.362385 & 29 \\ 
\hspace{-4pt}[1.530, 1.535]\hspace{-2pt} & 0.353822 & 29 \\ 
\hspace{-4pt}[1.535, 1.540]\hspace{-2pt} & 0.353423 & 31 \\ 
\hspace{-4pt}[1.540, 1.545]\hspace{-2pt} & 0.349617 & 29 \\ 
\hspace{-4pt}[1.545, 1.550]\hspace{-2pt} & 0.349617 & 29 \\ 
\hspace{-4pt}[1.550, 1.555]\hspace{-2pt} & 0.353423 & 29 \\ 
\hspace{-4pt}[1.555, 1.560]\hspace{-2pt} & 0.349617 & 29 \\ 
\hspace{-4pt}[1.560, 1.565]\hspace{-2pt} & 0.349617 & 29 \\ 
\hspace{-4pt}[1.565, 1.570]\hspace{-2pt} & 0.349617 & 29 \\ 
\hspace{-4pt}[1.570, 1.575]\hspace{-2pt} & 0.349617 & 29 \\ 
\hspace{-4pt}[1.575, 1.580]\hspace{-2pt} & 0.344595 & 29 \\ 
\hspace{-4pt}[1.580, 1.585]\hspace{-2pt} & 0.349617 & 29 \\ 
\hspace{-4pt}[1.585, 1.590]\hspace{-2pt} & 0.349617 & 31 \\ 
\hspace{-4pt}[1.590, 1.595]\hspace{-2pt} & 0.349617 & 29 \\ 
\hspace{-4pt}[1.595, 1.600]\hspace{-2pt} & 0.349617 & 29 \\ 
\hspace{-4pt}[1.600, 1.605]\hspace{-2pt} & 0.401206 & 25 \\ 
\hspace{-4pt}[1.605, 1.610]\hspace{-2pt} & 0.394853 & 25 \\ 
\hspace{-4pt}[1.610, 1.615]\hspace{-2pt} & 0.390054 & 25 \\ 
\hspace{-4pt}[1.615, 1.620]\hspace{-2pt} & 0.394853 & 27 \\ 
\hspace{-4pt}[1.620, 1.625]\hspace{-2pt} & 0.401206 & 25 \\ 
\hspace{-4pt}[1.625, 1.630]\hspace{-2pt} & 0.394853 & 25 \\ 
\hspace{-4pt}[1.630, 1.635]\hspace{-2pt} & 0.390054 & 25 \\ 
\hspace{-4pt}[1.635, 1.640]\hspace{-2pt} & 0.406401 & 23 \\ 
\hspace{-4pt}[1.640, 1.645]\hspace{-2pt} & 0.390054 & 27 \\ 
\hspace{-4pt}[1.645, 1.650]\hspace{-2pt} & 0.390054 & 25 \\ 
\hspace{-4pt}[1.650, 1.655]\hspace{-2pt} & 0.390054 & 25 \\ 
\hspace{-4pt}[1.655, 1.660]\hspace{-2pt} & 0.390054 & 27 \\ 
\hspace{-4pt}[1.660, 1.665]\hspace{-2pt} & 0.390054 & 27 \\ 
\hspace{-4pt}[1.665, 1.670]\hspace{-2pt} & 0.394853 & 25 \\ 
\hspace{-4pt}[1.670, 1.675]\hspace{-2pt} & 0.394853 & 25 \\ 
\hspace{-4pt}[1.675, 1.680]\hspace{-2pt} & 0.394853 & 25 \\ 
\hspace{-4pt}[1.680, 1.685]\hspace{-2pt} & 0.396415 & 25 \\ 
\hspace{-4pt}[1.685, 1.690]\hspace{-2pt} & 0.390054 & 27 \\ 
\hspace{-4pt}[1.690, 1.695]\hspace{-2pt} & 0.390054 & 25 \\ 
\hspace{-4pt}[1.695, 1.700]\hspace{-2pt} & 0.390054 & 25 \\ 
\hspace{-4pt}[1.700, 1.705]\hspace{-2pt} & 0.390054 & 25 \\ 
\hspace{-4pt}[1.705, 1.710]\hspace{-2pt} & 0.394853 & 25 \\ 
\hspace{-4pt}[1.710, 1.715]\hspace{-2pt} & 0.394853 & 25 \\ 
\hspace{-4pt}[1.715, 1.720]\hspace{-2pt} & 0.401206 & 25 \\ 
\hspace{-4pt}[1.720, 1.725]\hspace{-2pt} & 0.401206 & 27 \\ 
\hspace{-4pt}[1.725, 1.730]\hspace{-2pt} & 0.390054 & 25 \\ 
\hspace{-4pt}[1.730, 1.735]\hspace{-2pt} & 0.390054 & 25 \\ 
\hspace{-4pt}[1.735, 1.740]\hspace{-2pt} & 0.390054 & 27 \\ 
\end{tabular}
\hspace{0.2 cm}
\begin{tabular}{cc|c}
\hspace{-2pt}$\varepsilon$ interval & $h(f_\varepsilon)\geq$ & \hspace{-2pt}sym\hspace{-2pt} \\[1pt] \hline \\[-6pt] 
\hspace{-4pt}[1.740, 1.745]\hspace{-2pt} & 0.394853 & 25 \\ 
\hspace{-4pt}[1.745, 1.750]\hspace{-2pt} & 0.394853 & 25 \\ 
\hspace{-4pt}[1.750, 1.755]\hspace{-2pt} & 0.390054 & 25 \\ 
\hspace{-4pt}[1.755, 1.760]\hspace{-2pt} & 0.394853 & 25 \\ 
\hspace{-4pt}[1.760, 1.765]\hspace{-2pt} & 0.390054 & 27 \\ 
\hspace{-4pt}[1.765, 1.770]\hspace{-2pt} & 0.394853 & 25 \\ 
\hspace{-4pt}[1.770, 1.775]\hspace{-2pt} & 0.406401 & 25 \\ 
\hspace{-4pt}[1.775, 1.780]\hspace{-2pt} & 0.401206 & 25 \\ 
\hspace{-4pt}[1.780, 1.785]\hspace{-2pt} & 0.394853 & 25 \\ 
\hspace{-4pt}[1.785, 1.790]\hspace{-2pt} & 0.394853 & 25 \\ 
\hspace{-4pt}[1.790, 1.795]\hspace{-2pt} & 0.401206 & 25 \\ 
\hspace{-4pt}[1.795, 1.800]\hspace{-2pt} & 0.390054 & 25 \\ 
\hspace{-4pt}[1.800, 1.805]\hspace{-2pt} & 0.401206 & 25 \\ 
\hspace{-4pt}[1.805, 1.810]\hspace{-2pt} & 0.390054 & 25 \\ 
\hspace{-4pt}[1.810, 1.815]\hspace{-2pt} & 0.390054 & 27 \\ 
\hspace{-4pt}[1.815, 1.820]\hspace{-2pt} & 0.401206 & 25 \\ 
\hspace{-4pt}[1.820, 1.825]\hspace{-2pt} & 0.390054 & 25 \\ 
\hspace{-4pt}[1.825, 1.830]\hspace{-2pt} & 0.390054 & 25 \\ 
\hspace{-4pt}[1.830, 1.835]\hspace{-2pt} & 0.401206 & 27 \\ 
\hspace{-4pt}[1.835, 1.840]\hspace{-2pt} & 0.401206 & 25 \\ 
\hspace{-4pt}[1.840, 1.845]\hspace{-2pt} & 0.401206 & 25 \\ 
\hspace{-4pt}[1.845, 1.850]\hspace{-2pt} & 0.390054 & 27 \\ 
\hspace{-4pt}[1.850, 1.855]\hspace{-2pt} & 0.401206 & 25 \\ 
\hspace{-4pt}[1.855, 1.860]\hspace{-2pt} & 0.401206 & 25 \\ 
\hspace{-4pt}[1.860, 1.865]\hspace{-2pt} & 0.401206 & 27 \\ 
\hspace{-4pt}[1.865, 1.870]\hspace{-2pt} & 0.411995 & 25 \\ 
\hspace{-4pt}[1.870, 1.875]\hspace{-2pt} & 0.406401 & 25 \\ 
\hspace{-4pt}[1.875, 1.880]\hspace{-2pt} & 0.390054 & 25 \\ 
\hspace{-4pt}[1.880, 1.885]\hspace{-2pt} & 0.394853 & 25 \\ 
\hspace{-4pt}[1.885, 1.890]\hspace{-2pt} & 0.406401 & 25 \\ 
\hspace{-4pt}[1.890, 1.895]\hspace{-2pt} & 0.406401 & 25 \\ 
\hspace{-4pt}[1.895, 1.900]\hspace{-2pt} & 0.401206 & 25 \\ 
\hspace{-4pt}[1.900, 1.905]\hspace{-2pt} & 0.406401 & 25 \\ 
\hspace{-4pt}[1.905, 1.910]\hspace{-2pt} & 0.401206 & 25 \\ 
\hspace{-4pt}[1.910, 1.915]\hspace{-2pt} & 0.401206 & 25 \\ 
\hspace{-4pt}[1.915, 1.920]\hspace{-2pt} & 0.401206 & 25 \\ 
\hspace{-4pt}[1.920, 1.925]\hspace{-2pt} & 0.401206 & 25 \\ 
\hspace{-4pt}[1.925, 1.930]\hspace{-2pt} & 0.390054 & 25 \\ 
\hspace{-4pt}[1.930, 1.935]\hspace{-2pt} & 0.401206 & 25 \\ 
\hspace{-4pt}[1.935, 1.940]\hspace{-2pt} & 0.406401 & 25 \\ 
\hspace{-4pt}[1.940, 1.945]\hspace{-2pt} & 0.390054 & 25 \\ 
\hspace{-4pt}[1.945, 1.950]\hspace{-2pt} & 0.394853 & 25 \\ 
\hspace{-4pt}[1.950, 1.955]\hspace{-2pt} & 0.411995 & 25 \\ 
\hspace{-4pt}[1.955, 1.960]\hspace{-2pt} & 0.406401 & 25 \\ 
\hspace{-4pt}[1.960, 1.965]\hspace{-2pt} & 0.406401 & 25 \\ 
\hspace{-4pt}[1.965, 1.970]\hspace{-2pt} & 0.411995 & 25 \\ 
\hspace{-4pt}[1.970, 1.975]\hspace{-2pt} & 0.406401 & 23 \\ 
\hspace{-4pt}[1.975, 1.980]\hspace{-2pt} & 0.411995 & 23 \\ 
\hspace{-4pt}[1.980, 1.985]\hspace{-2pt} & 0.411995 & 25 \\ 
\hspace{-4pt}[1.985, 1.990]\hspace{-2pt} & 0.411995 & 23 \\ 
\hspace{-4pt}[1.990, 1.995]\hspace{-2pt} & 0.411995 & 25 \\ 
\hspace{-4pt}[1.995, 2.000]\hspace{-2pt} & 0.411995 & 25 \\ 
\end{tabular}
\end{center}
}

\bibliographystyle{amsalpha}
\bibliography{biblio}{}
\end{document}